\providecommand{\U}[1]{\protect\rule{.1in}{.1in}}
\providecommand{\U}[1]{\protect\rule{.1in}{.1in}}
\newtheorem{algorithm}[theorem]{Algorithm}
\newtheorem{assumption}[theorem]{Assumption}
\begin{document}
\title{Nested {BDDC}
for a saddle-point problem \thanks{Supported in part
by the National Science Foundation under grant \mbox{DMS-0713876},
and by the Grant Agency of the Czech Republic \mbox{GA \v{C}R 106/08/0403}.
Support from DOE/ASCR is also gratefully acknowledged.
}
}
\author{Bed\v{r}ich~Soused\'{\i}k}
\institute{B. Soused\'{\i}k \at
Department of Aerospace and Mechanical Engineering,
University of Southern California,
Los Angeles, CA 90089-2531, USA.
\email{sousedik@usc.edu}
\at Institute of Thermomechanics,
Academy of Sciences of the Czech Republic,
Dolej\v{s}kova 1402/5, 182~00 Prague~8, Czech Republic.
\at\emph{Part of the work has been completed while the author was
a Research Assistant Professor
at the Department of Mathematical and Statistical Sciences,
University of Colorado Denver.}
}
\date{}
\maketitle\begin{abstract}
We propose a Nested {BDDC} for a class of saddle-point problems. The
method solves for both flux and pressure variables. The fluxes are resolved in
three-steps: the coarse solve is followed by subdomain solves, and last we
look for a divergence-free flux correction and pressure variables using
conjugate gradients with a Multilevel BDDC preconditioner.
Because the coarse solve in the first step has the same structure as the
original problem, we can use this procedure recursively and solve
(a hierarchy of) coarse problems only approximately, utilizing the coarse problems
known from the BDDC. The resulting algorithm thus first performs several
upscaling steps, and then solves
a hierarchy of problems that have the same structure but increase in size
while sweeping down the levels, using the same components in the first
and in the
third step on each level,
and also reusing the components from the
higher levels. Because the coarsening can be quite aggressive, the number of
levels can be kept small and the additional computational cost is
significantly reduced due to the reuse of the components.
We also provide the condition number bound and numerical experiments
confirming the theory.
\keywords{Iterative substructuring \and balancing domain decomposition \and
BDDC \and multilevel methods \and multiscale methods \and
saddle-point problems}
\subclass{65F08 \and65F10 \and65M55 \and65N55 \and65Y05}
\end{abstract}%

\section{Introduction}

\label{sec:introduction}

The Balancing Domain Decomposition by Constraints (BDDC), proposed
independently by Cros~\cite{Cros-2003-PSC}, Dohrmann~\cite{Dohrmann-2003-PSC},
and Fragakis and Papadrakakis~\cite{Fragakis-2003-MHP}, is along with the
Finite Element Tearing and Interconnecting - Dual, Primal ({FETI-DP}) method
by Farhat et al.~\cite{Farhat-2001-FDP,Farhat-2000-SDP} currently one of the
most advanced and popular methods of iterative substructuring.
These methods have been derived by modifications of the {BDD} method by
Mandel~\cite{Mandel-1993-BDD}, and of the {FETI} method by Farhat and
Roux~\cite{Farhat-1991-MFE}, respectively. The relations between these two
families of methods have been studied extensively by many analysts in the
substructuring field cf., e.g.,
\cite{Brenner-2007-BFW,Li-2006-FBB,Mandel-2005-ATP,Mandel-2007-BFM}, and
also~\cite{Sousedik-2008-CDD}.
The methods have been also extended to multiple levels: one can find
multilevel extensions of the BDDC
in~\cite{Mandel-2008-MMB,Sousedik-2010-AMB-thesis,Sousedik-2011-AMB,Tu-2007-TBT3D,Tu-2007-TBT}
and of FETI in~\cite{Klawonn-2009-HA3}.
Here, we will be interested in extensions to saddle-point problems, such as to
the Stokes problem~\cite{Li-2006-BAI,Pavarino-2002-BNI,Sistek-2011-APB} and in
particular to the flow in porous media. One of the first domain decomposition
methods for mixed finite element problems were proposed by Glowinski and
Wheeler~\cite{Glowinski-1988-DDM}. Their Method II
has been preconditioned using BDD by Cowsar et al.~\cite{Cowsar-1995-BDD} and
using BDDC by Tu~\cite{Tu-2007-BAF}. This approach is sometimes regarded as
\emph{hybrid} because the method iterates on a system of \emph{dual} variables
(as Lagrange multipliers) enforcing the continuity of flux variables across
the substructure interfaces. However, in order to simplify 
a multilevel extension, we would like to retain the original \emph{primal} variables, and
therefore we find the recent work of Tu~\cite{Tu-2005-BAM,Tu-2011-TBA} to be
more relevant for our
approach.


In this paper, we propose a Nested {BDDC} method, which is a generalization of
the Multilevel BDDC into a larger algorithmic framework suited for a class of
saddle-point problems. Our starting point is the algorithm of Ewing and
Wang~\cite{Ewing-1992-ASA}, see also Mathew~\cite{Mathew-1993-SAIa}. The basic
idea is to solve for flux variables in three-steps: first we perform a coarse
solve which is followed by independent subdomain solves with zero boundary
conditions in the second step. In the third step, we look for a flux
correction and pressures.
Due to the design of the algorithm, the flux correction is
divergence-free, and we can use conjugate gradients (CG, resp. PCG) with a
preconditioner that preserves all of the iterates in the divergence-free
subspace. To this end we adapt the Multilevel BDDC
preconditioner\ from~\cite{Mandel-2008-MMB} to saddle-point problems.
Applications of the two-, resp. three-level BDDC in the third step of this
algorithm have been studied by Tu in~\cite{Tu-2005-BAM,Tu-2011-TBA}. Also, one
has to make a careful decision in the design of the coarse solve for the first
step. A straightforward idea is to use the same, but \textquotedblleft
coarse\textquotedblright\ finite element discretization and a natural (linear)
interpolation between the two meshes as considered
in~\cite{Mathew-1993-SAIa,Tu-2005-BAM}. Alternatively, the coarse solve has
been obtained by an action of the BDDC preconditioner on a carefully chosen
vector by Tu in~\cite{Tu-2006-BDD,Tu-2011-TBA} and she has also numerically
observed a very similar performance of the two choices~\cite[Section
4.8]{Tu-2006-BDD}. Obviously, we favor here the second idea.
Next, noting that the coarse solve in the first step has the same structure as
the original problem, we can use the algorithm recursively, and solve a
hierarchy of coarse solves only approximately.
The resulting algorithm of the Nested BDDC thus first creates a hierarchy of
(coarse) problems with similar structure scaling-up through the levels. Then
this hierarchy
is solved, while sweeping down the levels in a loop of outer iterations, using
the same components in the first and the third step on each level, and also
reusing the components from all of the previous (higher) levels. Because the
coarsening can be quite aggressive, the number of levels can be kept small and
the additional computational cost is significantly reduced due to the reusing
of components. From this perspective our method can be viewed as a way of
numerical upscaling via the coarse basis functions known from the BDDC.
Therefore, unlike some of the previous works, we do not use the global
partially assembled matrices neither the change of variables.

It is important to note that for the solution of closely related Stokes
problem, the algorithm is reduced to step three because the solution itself
is
divergence-free.
We also remark that the present approach is limited by a special choice of
finite elements. In particular, we will work with the lowest-order
Raviart-Thomas ({RT0}) elements that have piecewise constant basis functions
for pressure variables. This is not the case when, e.g., Taylor-Hood elements
are used and the BDDC preconditioned operator is no longer invariant on the
divergence-free subspace~\cite{Sistek-2011-APB}. Finally, we note that our
framework allows for irregular mesh decompositions, heterogeneous coefficients
possibly utilizing the adaptive approach as
in~\cite{Mandel-2007-ASF,Sousedik-2010-AMB-thesis}, and also allows for a
relatively straightforward extension into 3D. However, such extensions will be
studied elsewhere.

The paper is organized as follows. In Section~\ref{sec:model} we introduce the
model problem, in Section~\ref{sec:mixed} we introduce its mixed finite
element discretization and recall the original algorithm of Ewing and Wang. In
Section~\ref{sec:two-scale} we derive the two-level version of this algorithm
using the BDDC components. In Section~\ref{sec:multiscale} we formulate the
Nested BDDC method. In Section~\ref{sec:condition} we derive the condition
number bound for the model problem, and finally in Section~\ref{sec:numerical}
we report on numerical experiments with a particular application to flow in
porous media.

Throughout the paper we find it more convenient to work with abstract finite-dimensional 
spaces and linear operators between them instead of the space $%
\mathbb{R}
^{n}$ and matrices. The results can be easily converted to\ the matrix
language by choosing a finite element basis. For a symmetric positive definite
bilinear form$~a$, we will denote the energy norm by $\left\Vert u\right\Vert
_{a}=\sqrt{a\left(  u,u\right)  }$.

\section{Model problem}

\label{sec:model}

Let $\Omega$ be a bounded polygonal domain in $%
\mathbb{R}
^{n}$, $n=2$. Let us consider the following scalar, second-order, elliptic
problem given as
\begin{equation}
-\nabla\cdot k\nabla p=f,\quad\text{in }\Omega, \label{eq:problem}%
\end{equation}
where $k$ is a symmetric, uniformly positive definite matrix with bounded
coefficients, the right-hand side $f\in L^{2}\left(  \Omega\right)  $, subject
to sufficiently smooth boundary data on $\partial\Omega=\overline{\Gamma}%
_{E}\cup\overline{\Gamma}_{N}$. Equation~(\ref{eq:problem}) describes, e.g., a
pressure field in an aquifer and therefore the variable $p$ will be called
pressure. However, in reservoir simulations we are often interested in
computing $-k\nabla p$ directly.

Introducing the so-called flux variable%
\begin{equation}
\mathbf{u}=-k\nabla p, \label{eq:u}%
\end{equation}
we may rewrite~(\ref{eq:problem}) as a first-order system, generally known as
Darcy's problem,
\begin{align*}
k^{-1}\mathbf{u}+\nabla p  &  =0,\quad\text{in }\Omega,\\
\nabla\cdot\mathbf{u}  &  =f,\quad\text{in }\Omega,\\
p  &  =g_{N},\quad\text{on }\Gamma_{N},\\
\mathbf{u}\cdot\mathbf{n}  &  =g_{E},\quad\text{on }\Gamma_{E},
\end{align*}
where $\mathbf{n}$\ is the unit outward normal of $\Omega$,
and for the boundary conditions it holds that $g_{N}\in H^{1/2}\left(
\Gamma_{N}\right)  $, \ and $g_{E}\in H_{00}^{-1/2}\left(  \Gamma_{E}\right)
$. Without loss of generality, we will consider $\Gamma_{N}=\emptyset$. This
case requires a compatibility condition%
\begin{equation}
\int_{\Omega}f\,dx+\int_{\partial\Omega}g_{E}\,ds=0, \label{eq:compatibility}%
\end{equation}
and the pressure $p$ will be determined uniquely up to an additive constant.
Let us also for simplicity assume that $g_{E}=0$, and let us define a space%
\begin{equation}
\mathbf{H}_{0}(\Omega;\operatorname{div})=\left\{  \mathbf{v:v}\in
L^{2}(\Omega);\nabla\cdot\mathbf{v}\in L^{2}(\Omega)\mathbf{\quad}%
\text{and}\mathbf{\quad v}\cdot\mathbf{n}=0\;\text{on }\partial\Omega\right\}
, \label{eq:H0div}%
\end{equation}
equipped with the norm
\[
\left\Vert v\right\Vert _{\mathbf{H}_{0}(\Omega;\operatorname{div})}%
^{2}=\left\Vert \mathbf{v}\right\Vert _{L^{2}(\Omega)}^{2}+H_{\Omega}%
^{2}\left\Vert \nabla\cdot\mathbf{v}\right\Vert _{L^{2}(\Omega)}^{2},
\]
where $H_{\Omega}$ denotes the characteristic size of $\Omega$,\ and the
space
\[
L_{0}^{2}(\Omega)=\left\{  q:q\in L^{2}(\Omega)\mathbf{\quad}\text{and}%
\mathbf{\quad}\int_{\Omega}q\,dx=0\right\}  .
\]
The weak form of the Darcy's problem, we would like to solve, is
\begin{align}
\int_{\Omega}k^{-1}\mathbf{u}\cdot\mathbf{v}\,dx-\int_{\Omega}p\nabla
\cdot\mathbf{v}\,dx  &  =0,\quad\forall\mathbf{v}\in\mathbf{H}_{0}%
(\Omega;\operatorname{div}),\label{eq:problem-weak-1}\\
-\int_{\Omega}\nabla\cdot\mathbf{u}q\,dx  &  =-\int_{\Omega}fq\,dx,\quad
\forall q\in L_{0}^{2}\left(  \Omega\right)  . \label{eq:problem-weak-2}%
\end{align}
We refer to the monographs~\cite{Brezzi-1991-MHF,Toselli-2005-DDM} for
additional details.

\section{Mixed finite elements and basic algorithm}

\label{sec:mixed}

Let $U$ be the lowest order Raviart-Thomas finite element space with a zero
normal component on $\partial\Omega$ and $Q$ be a space of piecewise constants
with a zero mean on $\Omega$. These two spaces, defined on the triangulation
$\mathcal{T}_{h}$ of $\Omega$ where $h$\ denotes the mesh size, are finite-dimensional 
subspaces of $\mathbf{H}_{0}(\Omega;\operatorname{div})$ and
$L_{0}^{2}(\Omega)$, respectively, and they satisfy a uniform inf-sup
condition, see~\cite{Brezzi-1991-MHF}.

Let us define the bilinear forms and the right-hand side by%
\begin{align}
a\left(  u,v\right)   &  =\int_{\Omega}k^{-1}\mathbf{u}\cdot\mathbf{v}%
\,dx,\label{eq:a}\\
b\left(  u,q\right)   &  =-\int_{\Omega}\nabla\cdot\mathbf{u}%
q\,dx,\label{eq:b}\\
\left\langle f,q\right\rangle  &  =-\int_{\Omega}fq\,dx. \label{eq:rhs}%
\end{align}

In the mixed variational formulation of the Darcy's problem, eq.
(\ref{eq:problem-weak-1})-(\ref{eq:problem-weak-2}), we would like to find a
pair $\left(  u,p\right)  \in\left(  U,Q\right)  $ such that
\begin{align}
a\left(  u,v\right)  +b\left(  v,p\right)   &  =0,\qquad\forall v\in
U,\label{eq:variational-1}\\
b\left(  u,q\right)   &  =\left\langle f,q\right\rangle ,\qquad\forall q\in
Q.\label{eq:variational-2}%
\end{align}
Let us split the domain $\Omega$ into non-overlapping subdomains $\Omega_{i}$,
$i=1,\dots,N$, assuming further that they form a triangulation of$~\Omega$,
e.g., for a moment as macroelements. Accordingly, let us split the solution
spaces as%
\begin{align}
U &  =U_{0}+\left(  \oplus_{i=1}^{N}U_{i}\right)  +U_{\text{corr}%
},\label{eq:U_decomposition}\\
Q &  =\oplus_{i=0}^{N}Q_{i}.\label{eq:P_decomposition}%
\end{align}
The spaces $U_{0}$, $Q_{0}$ are obtained by considering subdomains as
macroelements. The spaces $U_{i}$, $Q_{i}$, for $i=1,\dots,N$, are obtained by
a restriction from the global solution spaces $U$, $Q$. More specifically,
because $U_{I}=\oplus_{i=1}^{N}U_{i}$, the functions from $U_{i}$ have
vanishing normal components (i.e., zero fluxes) along the subdomain
interfaces. Also, in order to determine the pressure $p$ uniquely, we will
consider the component $p_{0}\in Q_{0}$, which is constant in each
subdomain$~\Omega_{i}$, to have a zero average over the whole domain $\Omega$,
and the components $p_{i}\in Q_{i}$ to have zero averages over the subdomain
$\Omega_{i}$ and identically equal to zero in other subdomains. The
introduction of the auxiliary space $U_{\text{corr}}=U$ is motivated by an
observation that in general%
\begin{equation}
U\neq U_{0}+\left(  \oplus_{i=1}^{N}U_{i}\right)
,\label{eq:U_decomposition_neq}%
\end{equation}
because the fluxes on subdomain interfaces might not be constant. We note that
we will take an advantage of this splitting, in particular because for all
$u_{I}\in U_{I}$\ and $q_{0}\in Q_{0}$, it holds, by the divergence theorem,
that%
\begin{equation}
b\left(  u_{I},q_{0}\right)  =-\int_{\Omega}\left(  \nabla\cdot u_{I}\right)
\,q_{0}\,dx=0.\label{eq:b-orth}%
\end{equation}
The following algorithm is due to Ewing and Wang~\cite{Ewing-1992-ASA}, cf.
also Mathew~\cite{Mathew-1993-SAIa}.

\begin{algorithm}
[Basic]\label{alg:basic} Find the pair $\left(  u,p\right)  \in\left(
U,Q\right)  $\ that satisfies (\ref{eq:variational-1})-(\ref{eq:variational-2}%
) as%
\[
u=u_{0}+\sum_{i=1}^{N}u_{i}+u_{\text{corr}},
\]
in the following three steps: Compute

\begin{enumerate}
\item the coarse component $\left(  u_{0},p_{0}\right)  \in\left(  U_{0}%
,Q_{0}\right)  $ by solving%
\begin{align}
a\left(  u_{0},v_{0}\right)  +b\left(  v_{0},p_{0}\right)   &  =0,\qquad
\forall v_{0}\in U_{0},\label{eq:u_0}\\
b\left(  u_{0},q_{0}\right)   &  =\left\langle f,q_{0}\right\rangle
,\qquad\forall q_{0}\in Q_{0}. \label{eq:p_0}%
\end{align}
Note that because $Q_{0}\varsubsetneq Q$, in general%
\[
b\left(  u_{0},q\right)  \neq\left\langle f,q\right\rangle ,\qquad\forall q\in
Q.
\]

\item the substructure components $\left(  u_{i},p_{i}\right)  \in\left(
U_{i},Q_{i}\right)  $ for $i=1,\dots,N$\ from%
\begin{align*}
a\left(  u_{i},v_{i}\right)  +b\left(  v_{i},p_{i}\right)   &  =-a\left(
u_{0},v_{i}\right)  ,\qquad\forall v_{i}\in U_{i},\\
b\left(  u_{i},q_{i}\right)   &  =\left\langle f,q_{i}\right\rangle -b\left(
u_{0},q_{i}\right)  ,\qquad\forall q_{i}\in Q_{i}.
\end{align*}
\qquad\qquad

Add the computed solutions as%
\[
u^{\ast}=u_{0}+\sum_{i=1}^{N}u_{i}.
\]
Due to the correction in the second step, and with respect to
(\ref{eq:P_decomposition}), we obtain%
\begin{equation}
b\left(  u^{\ast},q\right)  =\left\langle f,q\right\rangle, \qquad\forall q\in
Q. \label{eq:u^star-in_b}%
\end{equation}
On the other hand, from (\ref{eq:U_decomposition_neq}), in general $u^{\ast
}\neq u$. Therefore, we also need

\item the correction $u_{\text{corr}}\in U_{\text{corr}}=U$. Considering
\[
u=u^{\ast}+u_{\text{corr}},
\]
substituting into (\ref{eq:variational-1})-(\ref{eq:variational-2}) and using
(\ref{eq:u^star-in_b}), compute $\left(  u_{\text{corr}},p\right)  \in\left(
U,Q\right)  $ from%
\begin{align*}
a\left(  u_{\text{corr}},v\right)  +b\left(  v,p\right)   &  =-a\left(
u^{\ast},v\right)  ,\qquad\forall v\in U,\\
b\left(  u_{\text{corr}},q\right)   &  =0,\qquad\forall q\in Q.
\end{align*}

\end{enumerate}
\end{algorithm}

\begin{remark}
We would like to accentuate the reduction effect of Algorithm~\ref{alg:basic}:
the structural difference between problem (\ref{eq:variational-1})-(\ref{eq:variational-2})
and the problem in Step~3 of Algorithm~\ref{alg:basic} is that the right-hand side of the reduced
problem has a vanishing second component, which corresponds to the divergence-free subspace.
Also, because the pressure components $p_{0}$, $p_{I}$ computed in
Step 1 and Step 2, resp., are tested only against proper subspaces of $U$, we simply disregard
them.
\end{remark}

The application of the BDDC\ preconditioner for the computation of
$u_{\text{corr}}$ for the two-, resp. three-level BDDC method has been studied
by Tu~\cite{Tu-2005-BAM,Tu-2011-TBA}. However, comparing (\ref{eq:u_0}%
)-(\ref{eq:p_0}) with (\ref{eq:variational-1})-(\ref{eq:variational-2}), we
see that in fact we can use the same algorithm recursively, with multiple
levels,
to solve for both $u_{0}$\ and $u_{\text{corr}}$. But first, let us
reformulate the basic Algorithm~\ref{alg:basic} with {BDDC components}.

\section{Basic algorithm with {BDDC components}}

\label{sec:two-scale}

We begin by introducing the substructuring components.
Let $\Omega$ be decomposed into nonoverlapping subdomains $\Omega_{i}$,
$i=1,\ldots,N,$ also called substructures, forming a quasi-uniform
triangulation of $\Omega$ with the characteristic subdomain size~$H$. Each
substructure is a union of the lowest order Raviart-Thomas ({RT0}) finite
elements with a matching discretization across the substructure interfaces.
Let $\Gamma_{i}=\partial\Omega_{i}\backslash\partial\Omega$\ be the set of
boundary degrees of freedom of the substructure$~\Omega_{i}$\ shared with
other substructures$~\Omega_{j}$, $j\neq i$, and let us define the interface
by $\Gamma=\cup_{i=1}^{N}\Gamma_{i}$. Let us denote by $\mathcal{F}$ the set
of all faces between substructures, i.e., in the present context the set of
all intersections $\Gamma_{ij}=\Gamma_{i}\cap\Gamma_{j}$, $i\neq j$. Note that
with respect to our discretization we define only \emph{faces}, but no
\emph{corners} (nor \emph{edges} in 3D) known from other types of
substructuring. Let us also slightly generalize the settings by allowing for
constant coefficients $k_{i}$\ in each subdomain$~\Omega_{i}$ separately.

Let us consider, cf. eq.~(\ref{eq:P_decomposition}), the decomposition of the
pressure space
\begin{equation}
Q=Q_{0}\oplus Q_{I}\text{,\quad and\quad}Q_{I}=Q_{1}\times\dots\times Q_{N},
\label{eq:Q-dec}%
\end{equation}
where $Q_{0}$ consists of constant functions in each subdomain, such that%
\[
\int_{\Omega}q_{0}\,dx=0,\quad\forall q_{0}\in Q_{0},\quad\text{and}\quad
\int_{\Omega_{i}}q_{i}\,dx=0,\quad\forall q_{i}\in Q_{i},\;i=1,\dots,N.
\]
Again, the space $Q$ is a finite-dimensional subspace of~$L^2_0 \left (\Omega \right)$, 
and therefore the unique solvability of all subsequently considered mixed problems is guaranteed.

Next, let $W_{i}$ be the space of the flux finite element functions on a
substructure $\Omega_{i}$ such that all of their degrees of freedom on
$\partial\Omega_{i}\cap\partial\Omega$ are zero, and let
\[
W=W_{1}\times\dots\times W_{N}.
\]
Now $U\subset W$ can be viewed as the subspace of all functions from~$W$
continuous across substructure interfaces. Define $U_{I}\subset U$ as the
subspace of functions that are zero on the interface $\Gamma$,
i.e., the space of \textquotedblleft interior\textquotedblright\ functions and
let us define a projection$~P:w\in W\longmapsto\left(  u_{I},p_{I}\right)
\in\left(  U_{I},Q_{I}\right)  $ such that
\begin{align*}
a\left(  u_{I},v_{I}\right)  +b\left(  v_{I},p_{I}\right)   &  =a\left(
w,v_{I}\right)  ,\quad\forall v_{I}\in U_{I},\\
b\left(  u_{I},q_{I}\right)   &  =b\left(  w,q_{I}\right)  ,\quad\forall
q_{I}\in Q_{I}.
\end{align*}
Let us also define a projection$~P_{a}:w\in W\longmapsto u_{I}\in U_{I}%
\ $\ such that%
\[
a\left(  u_{I},v_{I}\right)  =a\left(  w,v_{I}\right)  ,\quad\forall v_{I}\in
U_{I}.
\]
Functions from the nullspace of $P$ and $P_{a}$\ will be called Stokes
harmonic and discrete harmonic, respectively. The following comparison of
their energies, cf.~\cite[Lemma 9.10]{Toselli-2005-DDM}, will allow us to
apply some arguments from the scallar elliptic theory
in~\cite{Mandel-2008-MMB} to the saddle-point problem considered here.

\begin{lemma}
\label{lem:norm-equiv}
Let $w\in W$. Then,%
\[
c\left\Vert \left(  I-P\right)  w\right\Vert _{a}\leq\left\Vert \left(
I-P_{a}\right)  w\right\Vert _{a}\leq\left\Vert \left(  I-P\right)
w\right\Vert _{a}.%
\]
\end{lemma}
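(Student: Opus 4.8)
The plan is to build everything on a single algebraic identity. Writing $u=Pw$ with flux component and pressure $p\in Q_I$, and $\bar u=P_a w$, note that
\[
(I-P)w=(I-P_{a})w-(P-P_{a})w ,
\]
where $(I-P_{a})w$ is $a$-orthogonal to $U_{I}$ (that is the defining property of the $a$-orthogonal projection $P_{a}$), while $(P-P_{a})w=u-\bar u\in U_{I}$. Hence the two summands are $a$-orthogonal and Pythagoras gives
\[
\left\Vert (I-P)w\right\Vert _{a}^{2}=\left\Vert (I-P_{a})w\right\Vert _{a}^{2}+\left\Vert (P-P_{a})w\right\Vert _{a}^{2}.
\]
The right-hand inequality of the lemma is then immediate, since the last term is nonnegative. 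Moreover this identity reduces the left-hand inequality to an estimate of the form $\left\Vert (P-P_{a})w\right\Vert _{a}\le C\left\Vert (I-P_{a})w\right\Vert _{a}$ with $C$ uniform in $h$ and $H$; one then takes $c=(1+C^{2})^{-1/2}$.

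Next I would identify the saddle-point problem solved by $e:=(P-P_{a})w$. Subtracting the defining relations of $P_{a}$ from those of $P$ shows that $e$ and $p$ satisfy
\begin{align*}
a(e,v_{I})+b(v_{I},p) &= 0, \qquad \forall v_{I}\in U_{I},\\
b(e,q_{I}) &= b\!\left((I-P_{a})w,q_{I}\right), \qquad \forall q_{I}\in Q_{I}.
\end{align*}
Taking $v_{I}=e$ and using the constraint with $q_{I}=p$ gives $\left\Vert e\right\Vert _{a}^{2}=-b(e,p)=-b\!\left((I-P_{a})w,p\right)$. The pair $(U_{I},Q_{I})$ inherits a uniform inf-sup condition with constant $\beta>0$ from the stability of the lowest-order Raviart-Thomas pair (cf.~\cite{Brezzi-1991-MHF}); applied to $a(e,v_{I})=-b(v_{I},p)$ this yields $\beta\left\Vert p\right\Vert _{L^{2}}\le\left\Vert e\right\Vert _{a}$. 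Combining the two facts and cancelling one factor of $\left\Vert e\right\Vert _{a}$ gives
\[
\left\Vert e\right\Vert _{a}\le\frac{1}{\beta}\,\sup_{q_{I}\in Q_{I}}\frac{b\!\left((I-P_{a})w,q_{I}\right)}{\left\Vert q_{I}\right\Vert _{L^{2}}},
\]
so the whole lemma is reduced to bounding this single supremum by a constant times $\left\Vert (I-P_{a})w\right\Vert _{a}$.

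The decisive and hardest step is therefore to show, for the \emph{discrete harmonic} field $g=(I-P_{a})w$, that $\sup_{q_{I}\in Q_{I}}b(g,q_{I})/\left\Vert q_{I}\right\Vert _{L^{2}}\le C\left\Vert g\right\Vert _{a}$ with $C$ uniform in $h$ and $H$. This is exactly where the saddle-point setting departs from the scalar elliptic theory: the crude continuity bound $b(g,q_{I})\le\left\Vert \nabla\cdot g\right\Vert _{L^{2}}\left\Vert q_{I}\right\Vert _{L^{2}}$ is useless, because $\left\Vert g\right\Vert _{a}\simeq\left\Vert g\right\Vert _{L^{2}}$ does not control the divergence except through an inverse inequality that would introduce a spurious $H/h$ factor. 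The gain must come from the discrete harmonicity $a(g,v_{I})=0$ for all $v_{I}\in U_{I}$, which forces the divergence of $g$ to be controlled by its energy on each subdomain. Concretely I would argue subdomain by subdomain: the restriction of $U_{I}$ to $\Omega_{i}$ consists of fluxes with vanishing normal component on all of $\partial\Omega_{i}$, and $\nabla\cdot$ maps this space onto $Q_{i}$; using the local inf-sup to choose a lifting of a given $q_{i}\in Q_{i}$ and exploiting the $a$-orthogonality of $g$ to the interior space, one absorbs the divergence into $\left\Vert g\right\Vert _{a}$ by a local scaling argument for the RT0 element. This is the saddle-point analogue of the energy comparison in~\cite[Lemma 9.10]{Toselli-2005-DDM}, and it is the step I expect to require the most care. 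Once it is established, substituting back through the estimate for $\left\Vert e\right\Vert _{a}$ and the Pythagorean identity closes the proof.
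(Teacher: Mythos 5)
The paper does not actually prove this lemma; it only points to \cite[Lemma~9.10]{Toselli-2005-DDM}, which concerns the Stokes problem with $H^{1}$-conforming velocities, so there is no internal argument to match yours against. Your first two steps are correct and are exactly the natural adaptation of that reference: $(P-P_{a})w\in U_{I}$ is $a$-orthogonal to $(I-P_{a})w$, the Pythagorean identity gives the right-hand inequality for free, and the reduction of the left-hand inequality to a bound on $\Vert e\Vert_{a}$, $e=(P-P_{a})w$, via the inf-sup condition is sound algebra.

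The genuine gap is the step you yourself flag as decisive and leave unproved, and it is not merely delicate: the target estimate $\sup_{q_{I}\in Q_{I}}b(g,q_{I})/\Vert q_{I}\Vert_{L^{2}}\leq C\Vert g\Vert_{a}$ for discrete harmonic $g$ is \emph{false} with $C$ independent of $h$. In the $H^{1}$ Stokes setting of the cited lemma the analogous bound is free because $\Vert\nabla\cdot v\Vert_{L^{2}}\leq\sqrt{n}\,|v|_{H^{1}}$; here the energy is only a weighted $L^{2}$ norm of the flux, and discrete harmonicity ($L^{2}$-orthogonality to $U_{I}$) does nothing to tame the divergence --- $(I-P_{a})w$ is precisely the extension that minimizes the $L^{2}$ norm while ignoring the divergence entirely, so your claim that the orthogonality ``forces the divergence of $g$ to be controlled by its energy'' has no mechanism behind it. Concretely, on a unit-square subdomain with a uniform $n\times n$ RT0 mesh take $w$ to be the basis function of a single boundary edge $e_{0}$; then $g=(I-P_{a})w$ satisfies $\Vert g\Vert_{a}\leq\Vert w\Vert_{a}\leq Ch$, whereas any competitor with the same boundary data and piecewise-constant-per-subdomain divergence must push a net flux of order $h$ across each of the $\sim1/h$ nested square contours around $e_{0}$, which forces $\Vert(I-P)w\Vert_{a}^{2}\geq ch^{2}\log(1/h)$ and, via a bounded right inverse of the divergence, $\Vert\operatorname{div}g-\overline{\operatorname{div}g}\Vert_{L^{2}}\geq c\,h\sqrt{\log(1/h)}$. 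Hence your supremum is at least of order $h\sqrt{\log(1/h)}$ while $C\Vert g\Vert_{a}=O(h)$, so the reduction cannot be closed; worse, the same example shows the lemma itself cannot hold with an $h$-independent constant in this RT0/$L^{2}$-energy setting, the ratio of energies degrading at least like $\sqrt{\log(H/h)}$. Any repair must either keep the product $b((I-P_{a})w,p)$ intact and use the structure of the specific pressure $p$ rather than a worst-case supremum over $Q_{I}$, or restrict $w$ to a subspace with controlled interface data, and in any case must allow the constant to depend on $H/h$.
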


Next, let $\widehat{W}$ be the space of all Stokes harmonic functions that are
continuous across substructure interfaces, and such that
\begin{equation}
U=U_{I}\oplus\widehat{W},\quad\text{and\quad}U_{I}\perp_{a}\widehat{W}.
\label{eq:int-harm-dec}%
\end{equation}

The first step in substructuring is typically the reduction of the problem to
the interfaces. In particular, let us consider Step~3 of
Algorithm~\ref{alg:basic}, which can be written a bit more generally as: find
a pair $\left(  u,p\right)  \in\left(  U,Q\right)  $ such that
\begin{align}
a\left(  u,v\right)  +b\left(  v,p\right)   &  =\left\langle f^{\ast
},v\right\rangle ,\quad\forall v\in U,\label{eq:Stokes-full-1}\\
b\left(  u,q\right)   &  =0,\quad\forall q\in Q. \label{eq:Stokes-full-2}%
\end{align}
The problem (\ref{eq:Stokes-full-1})-(\ref{eq:Stokes-full-2}) can be reduced
to finding $\left(  \widehat{w},p_{0}\right)  \in\left(  \widehat{W}%
,Q_{0}\right)  $ such that
\begin{align}
a\left(  \widehat{u},\widehat{v}\right)  +b\left(  \widehat{v},p_{0}\right)
&  =\left\langle f^{\ast},\widehat{v}\right\rangle ,\quad\forall\widehat{v}%
\in\widehat{W},\label{eq:Stokes-reduced-1}\\
b\left(  \widehat{u},q_{0}\right)   &  =0,\quad\forall q_{0}\in Q_{0}.
\label{eq:Stokes-reduced-2}%
\end{align}
Such \textquotedblleft reduction\textquotedblright\ is in implementation
achieved by elimination of the interiors, known also as static condensation,
see, e.g.,~\cite[Section 9.4.2]{Toselli-2005-DDM} for more details. Now, let
us define a subspace of \emph{balanced} functions
as%
\begin{equation}
\widehat{W}_{B}=\left\{  \widehat{v}\in\widehat{W}:b\left(  \widehat{v}%
,q_{0}\right)  =0,\quad\forall q_{0}\in Q_{0}\right\}  .
\label{eq:W_hat_balanced}%
\end{equation}
The problem (\ref{eq:Stokes-reduced-1})-(\ref{eq:Stokes-reduced-2}) is
equivalent to the following positive definite problem
\begin{equation}
\widehat{u}\in\widehat{W}_{B}:\quad a\left(  \widehat{u},\widehat{v}\right)
=\left\langle f^{\ast},\widehat{v}\right\rangle ,\quad\forall\widehat{v}%
\in\widehat{W}_{B}. \label{eq:Stokes-balanced}%
\end{equation}
Note that the space $U_{I}$ is balanced due to~(\ref{eq:b-orth}). Then, using
$\widehat{W}_{B}$ in the splitting~(\ref{eq:int-harm-dec}) implies that $U$ is
also balanced in the sense of the definition~(\ref{eq:W_hat_balanced}).

The BDDC\ method is a two-level preconditioner characterized by the selection
of certain \emph{coarse degrees of freedom}. In the present setting these will
be flux averages over each face, and pressure averages over each substructure,
cf. Assumption~\ref{ass:enough-constraints}.\ In particular, the value of a
coarse degree of freedom will be taken as an average of the fine scale degrees
of freedom. Next, let $\widetilde{W}\subset W$ be the subspace of all
functions such that the values of any flux coarse degrees of freedom have a
common value over a face shared by a pair of adjacent substructures, and
vanish on $\partial\Omega$. Next, define $\widetilde{W}_{\Pi}\subset
\widetilde{W}$ as the subspace of all functions such that their flux coarse
degrees of freedom between pairs of adjacent substructures coincide, and such
that they are Stokes harmonic, and let us also define $\widetilde{W}_{\Delta
}\subset W$ as the subspace of all functions such that their flux coarse
degrees of freedom vanish. Clearly, functions in $\widetilde{W}_{\Pi}$ are
uniquely determined by the values of their flux coarse degrees of freedom,
and
\begin{equation}
\widetilde{W}=\widetilde{W}_{\Delta}\oplus\widetilde{W}_{\Pi}.
\label{eq:tilde-dec}%
\end{equation}
Let $E$ be a projection from $\widetilde{W}$ onto $U$, defined by
taking some weighted average of corresponding degrees of freedom on
substructure interfaces, cf. Remark~\ref{rem:averaging}.

\begin{remark}
\label{rem:averaging} The entries in the matrix corresponding to the averaging
operator$~E$ are given by scaling weights corresponding to a degree of freedom $x \in \Omega_i$ as
\[
e_{i}(x)=%
\begin{cases}
\frac{k_{i}^{-\gamma}}{k_{i}^{-\gamma}+k_{j}^{-\gamma}} & \mbox{if }x\in
\partial\Omega_{i}\cap\partial\Omega_{j},\\
1 & \mbox{if }x\in\Omega_{i}\backslash\Gamma.
\end{cases}
\]
The case $\gamma=1$ corresponds to the so-called $\rho-$scaling,
$\gamma=0$, i.e. $e_{i}\left(  x\right)  $ is $1/2$ or $1,$ corresponds to the
multiplicity scaling, cf.~\cite{Klawonn-2008-AFA}. We note that the $\rho
-$scaling is the same as the stiffness scaling because each flux degree of freedom
is shared by two elements shared by at most a pair of subdomains.
\end{remark}

Next, observe that it is only required for $u^{\ast}$ to satisfy
(\ref{eq:u^star-in_b}). In particular, we do not need the substructures to
form the same discretization as on the finite element level. Instead, we can
conveniently retain the algebraic framework of the BDDC\ method introduced
above and use its coarse problem in place of the coarse solve in Step~1.
Specifically, let us set $U_{0}=\widetilde{W}_{\Pi}$. We are now ready to take
the second look at Algorithm~\ref{alg:basic} and formulate its first modification.

\begin{algorithm}
[Basic algorithm with BDDC\ components]\label{alg:two-scale}Find the solution
$\left(  u,p\right)  \in\left(  U,Q\right)  $\ of the problem
(\ref{eq:variational-1})-(\ref{eq:variational-2}) by computing:

\begin{enumerate}
\item the coarse component $u_{0}\in\widehat{W}:$
solving for $\left(  \widetilde{w}_{0},p_{0}\right)  \in\left(  \widetilde{W}%
_{\Pi},Q_{0}\right)  $\ the system
\begin{align}
a\left(  \widetilde{w}_{0},\widetilde{v}_{\Pi}\right)  +b\left(
\widetilde{v}_{\Pi},p_{0}\right)   &  =0,\qquad\forall\widetilde{v}_{\Pi}%
\in\widetilde{W}_{\Pi},\label{eq:two-scale-coarse-1}\\
b\left(  \widetilde{w}_{0},q_{0}\right)   &  =\left\langle f,q_{0}%
\right\rangle ,\qquad\forall q_{0}\in Q_{0}, \label{eq:two-scale-coarse-2}%
\end{align}
dropping $p_{0}$, and applying the projection%
\[
u_{0}=E\widetilde{w}_{0}.
\]

\item the substructure components $\left(  u_{I},p_{I}\right)  \in\left(
U_{I},Q_{I}\right)  $ solving%
\begin{align*}
a\left(  u_{I},v_{I}\right)  +b\left(  v_{I},p_{I}\right)   &  =-a\left(
u_{0},v_{I}\right)  ,\qquad\forall v_{I}\in U_{I},\\
b\left(  u_{I},q_{I}\right)   &  =\left\langle f,q_{I}\right\rangle -b\left(
u_{0},q_{I}\right)  ,\qquad\forall q_{I}\in Q_{I},
\end{align*}
\qquad\qquad

dropping $p_{I}$, and combining the solutions $u^{\ast}=u_{0}+u_{I}.$

\item the correction and the pressure $\left(  u_{\text{corr}},p\right)
\in\left(  U,Q\right)  $ from%
\begin{align*}
a\left(  u_{\text{corr}},v\right)  +b\left(  v,p\right)   &  =-a\left(
u^{\ast},v\right)  ,\qquad\forall v\in U,\\
b\left(  u_{\text{corr}},q\right)   &  =0,\qquad\forall q\in Q.
\end{align*}
Specifically, use the PCG method with the two-level BDDC preconditioner
defined in Algorithm~\ref{alg:two-level}, using the coarse problem
(\ref{eq:two-scale-coarse-1})-(\ref{eq:two-scale-coarse-2}).
\end{enumerate}

Finally, combine the three solutions as
\[
u=u_{0}+u_{I}+u_{\text{corr}}.
\]

\end{algorithm}

Note that we again disregard the pressures $p_{0}$ and $p_{I}$ from Steps~1
and~2 as in Algorithm~\ref{alg:basic}. The algorithm of the two-level
BDDC\ preconditioner used in Step~3
is closely related to the original version for elliptic problems,
cf.~\cite[Algorithm~11]{Mandel-2008-MMB}. For completeness its version for
saddle-point problems follows.

\begin{algorithm}
[Two-level BDDC preconditioner]\label{alg:two-level} Define the preconditioner
$\left(  r,0\right)  \in\left(  U^{\prime},Q^{\prime}\right)  \longmapsto
\left(  u,p\right)  \in\left(  U,Q\right)  $\ as follows:

\noindent Compute the interior pre-correction $\left(  u_{I},p_{I}\right)
\in\left(  U_{I},Q_{I}\right)  $ from%
\begin{align*}
a\left(  u_{I},z_{I}\right)  +b\left(  z_{I},p_{I}\right)   &  =\left\langle
r,z_{I}\right\rangle ,\qquad\forall z_{I}\in U_{I},\\
b\left(  u_{I},q_{I}\right)   &  =0,\qquad\forall q_{I}\in Q_{I}.
\end{align*}
Set up the updated residual%
\[
r_{B}\in U^{\prime},\quad\left\langle r_{B},v\right\rangle =\left\langle
r,v\right\rangle -\left[  a\left(  u_{I},v\right)  +b\left(  v,p_{I}\right)
\right]  ,\qquad\forall v\in U.
\]
Compute the substructure correction $w_{\Delta}\in\widetilde{W}_{\Delta}$
from
\begin{align*}
a\left(  w_{\Delta},z_{\Delta}\right)  +b\left(  z_{\Delta},p_{I\Delta
}\right)   &  =\left\langle r_{B},Ez_{\Delta}\right\rangle ,\qquad\forall
z_{\Delta}\in\widetilde{W}_{\Delta},\\
b\left(  w_{\Delta},q_{I}\right)   &  =0,\qquad\forall q_{I}\in Q_{I}.
\end{align*}
Compute the coarse correction $\left(  w_{\Pi},p_{0}\right)  \in\left(
\widetilde{W}_{\Pi},Q_{0}\right)  $ from
\begin{align*}
a\left(  w_{\Pi},z_{\Pi}\right)  +b\left(  z_{\Pi},p_{0}\right)   &
=\left\langle r_{B},Ez_{\Pi}\right\rangle ,\qquad\forall z_{\Pi}%
\in\widetilde{W}_{\Pi},\\
b\left(  w_{\Pi},q_{0}\right)   &  =0,\qquad\forall q_{0}\in Q_{0}.
\end{align*}
Add the averaged corrections%
\[
u_{B}=E\left(  w_{\Delta}+w_{\Pi}\right)  .
\]
Compute the interior post-correction $\left(  v_{I},q_{I}\right)  \in\left(
U_{I},Q_{I}\right)  $ from%
\begin{align*}
a\left(  v_{I},z_{I}\right)  +b\left(  z_{I},q_{I}\right)   &  =a\left(
u_{B},z_{I}\right)  ,\quad\forall z_{I}\in U_{I},\\
b\left(  v_{I},\overline{q}_{I}\right)   &  =b\left(  u_{B},\overline{q}%
_{I}\right)  ,\quad\forall\overline{q}_{I}\in Q_{I}.
\end{align*}
Apply the combined corrections%
\begin{align*}
u  &  =u_{I}+u_{B}-v_{I},\\
p  &  =p_{I}+p_{0}-q_{I}.
\end{align*}

\end{algorithm}

\begin{remark}
The solve in the space $\widetilde{W}_{\Delta}$\ gives rise to independent
problems on substructures and the global coarse problem in the space
$\widetilde{W}_{\Pi}$ is exactly the same as the one used in Step 1
of Algorithm~\ref{alg:two-scale}.
\end{remark}

We could implement Step 3 of Algorithm~\ref{alg:two-scale} by performing first
the static condensation, iteratively solving the problem in the spaces
$\left(  \widehat{W},Q_{0}\right)  $,\ and recovering the interiors after the
convergence. This would remove the interior pre-, and post-corrections from
Algorithm~\ref{alg:two-level}, cf.~\cite[Algorithms~7, 9, 11]{Mandel-2008-MMB}%
, but performance of these two versions would be the same,
cf.~\cite[Theorem~14]{Mandel-2008-MMB}. Such approach might be also more
appealing from the practical point of view, because it allows for iterations
on a much smaller, Schur complement, system of linear equations see,
e.g.,~\cite[Sections 4.3 and 9.4.2]{Toselli-2005-DDM} for details. For a proof
that given a sufficient number of constraints, the PCG\ method with the
two-level BDDC preconditioner is invariant on the space of balanced, resp.
divergence-free functions see~\cite[Lemma~2]{Tu-2005-BAM} or
Lemma~\ref{lem:divergence-free}\ in the next section.

In order to provide the condition number bound, let us introduce a larger
space of \emph{balanced} functions defined as%
\[
\widetilde{W}_{B}=\left\{  v\in\widetilde{W}:b\left(  v,q_{0}\right)
=0,\quad\forall q_{0}\in Q_{0}\right\}  ,
\]
i.e., $\widehat{W}_{B}\subset\widetilde{W}_{B}$, and for which we get, using
(\ref{eq:H0div}) and (\ref{eq:a}), the equivalence%
\begin{equation}
c\left\Vert v\right\Vert _{a}^{2}\leq\left\Vert v\right\Vert _{\mathbf{H}%
_{0}(\Omega;\operatorname{div})}^{2}\leq C\left\Vert v\right\Vert _{a}%
^{2},\quad\forall v\in\widetilde{W}_{B}. \label{eq:norm-equiv}%
\end{equation}

Due to the equivalence of the problems (\ref{eq:Stokes-full-1}%
)-(\ref{eq:Stokes-full-2}), (\ref{eq:Stokes-reduced-1}%
)-(\ref{eq:Stokes-reduced-2}) and (\ref{eq:Stokes-balanced}), and with respect
to the equivalence of norms (\ref{eq:norm-equiv}) and
Lemma~\ref{lem:norm-equiv}, we can conveniently use the $a-$norm in the
following estimate, and the condition number bound known from the elliptic
case cf., e.g.,~\cite[Theorem 4]{Mandel-2007-BFM} carries over.

\begin{theorem}
[{\cite[Lemma~8, Theorem~1]{Tu-2005-BAM}}]\label{thm:two-level-bound}The condition
number~$\kappa$ of the two-level BDDC preconditioner from
Algorithm~\ref{alg:two-level} satisfies the bound
\begin{equation}
\kappa\leq\omega=\max\left\{  {\sup_{w\in\widetilde{W}_{B}}\frac{\left\Vert
\left(  I-P\right)  Ew\right\Vert _{a}^{2}}{\left\Vert w\right\Vert _{a}^{2}%
},1}\right\}  \text{ }\leq C\left(  1+\log\frac{H}{h}\right)  ^{2}%
{.}\label{eq:two-scale-bound}%
\end{equation}
\end{theorem}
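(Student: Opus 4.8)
The plan is to prove the two inequalities in~(\ref{eq:two-scale-bound}) separately: first the algebraic bound $\kappa\le\omega$, which follows the abstract BDDC framework, and then the polylogarithmic estimate $\omega\le C(1+\log(H/h))^{2}$, which is the analytic core. In both parts the device that makes the scalar-elliptic theory applicable is Lemma~\ref{lem:norm-equiv} together with the norm equivalence~(\ref{eq:norm-equiv}): they let me replace the Stokes-harmonic projection $I-P$ by the $a$-orthogonal discrete-harmonic projection $I-P_{a}$ at the price of fixed constants, and measure everything in the $a$-norm of flux functions. Since the problem~(\ref{eq:Stokes-full-1})-(\ref{eq:Stokes-full-2}) is equivalent to the positive-definite balanced problem~(\ref{eq:Stokes-balanced}) on $\widehat{W}_{B}$, it suffices to analyze the operator $M^{-1}S$, where $S$ is the reduced operator of~(\ref{eq:Stokes-balanced}) and $M^{-1}$ is the action of the preconditioner of Algorithm~\ref{alg:two-level}.

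For the algebraic bound I would work in the $a$-inner product and show that $M^{-1}S$ is self-adjoint and positive definite with spectrum in $[1,\omega]$. The lower bound $\lambda_{\min}\ge1$ is the standard BDDC stability property: $E$ restricts to the identity on continuous balanced functions, and the decomposition furnished by the substructure solve in $\widetilde{W}_{\Delta}$ together with the coarse solve in $\widetilde{W}_{\Pi}$ has energy no smaller than the assembled energy. The largest eigenvalue is characterized by the Rayleigh quotient appearing in~(\ref{eq:two-scale-bound}), $\lambda_{\max}=\sup_{w\in\widetilde{W}_{B}}\|(I-P)Ew\|_{a}^{2}/\|w\|_{a}^{2}$, because averaging by $E$ followed by the harmonic extension $(I-P)$ is exactly the operator whose $a$-energy must be compared with that of the partially assembled $w$. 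The outer $\max\{\cdot,1\}$ only ensures $\omega\ge1$; the quotient already attains a value $\ge1$ on continuous $w$, where $Ew=w$.

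The polylogarithmic estimate is the main obstacle, and I would prove it face by face. Writing $Ew=w-(I-E)w$ and using that $I-P_{a}$ is $a$-orthogonal, so that $\|(I-P)w\|_{a}\le C\|w\|_{a}$ by Lemma~\ref{lem:norm-equiv}, the triangle inequality reduces the task to the jump term:
\[
\|(I-P)Ew\|_{a}\le\|(I-P)w\|_{a}+\|(I-P)(I-E)w\|_{a}.
\]
Because functions in $\widetilde{W}=\widetilde{W}_{\Delta}\oplus\widetilde{W}_{\Pi}$ share their coarse degrees of freedom, $(I-E)w$ is a sum of contributions supported on the individual faces $\Gamma_{ij}\in\mathcal{F}$, each a weighted jump of the two subdomain flux traces with weights $e_{i}(x)$ from Remark~\ref{rem:averaging} and with vanishing face average. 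For a single face I would invoke the flux analogue of the classical logarithmic face lemma: for the lowest-order Raviart--Thomas discretization the $a$-energy of the harmonic extension of an average-free single-face jump is bounded by $C(1+\log(H/h))^{2}$ times the local subdomain energies. This trace estimate is the one genuinely technical step; it rests on an $H^{1/2}$-type inequality for RT0 normal components and is exactly where the element choice enters, since the piecewise-constant pressures make $b(\cdot,q_{0})$ see only face averages, so the balanced constraint removes precisely the constant mode that the logarithmic inequality cannot control. Summing over the uniformly bounded number of faces meeting each subdomain and using the $\rho$-scaling to keep the constants independent of the coefficient jumps $k_{i}$ then yields $\|(I-P)(I-E)w\|_{a}^{2}\le C(1+\log(H/h))^{2}\|w\|_{a}^{2}$, and hence the asserted bound on $\omega$.
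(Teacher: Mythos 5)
Your sketch is, in substance, the proof that the paper delegates entirely to Tu \cite[Lemma~8, Theorem~1]{Tu-2005-BAM}: the paper offers no argument of its own beyond noting that the equivalence of (\ref{eq:Stokes-full-1})--(\ref{eq:Stokes-full-2}) with the balanced problem (\ref{eq:Stokes-balanced}), together with Lemma~\ref{lem:norm-equiv} and the norm equivalence (\ref{eq:norm-equiv}), lets the elliptic BDDC bound of \cite{Mandel-2007-BFM} carry over, plus Remark~\ref{rem:two-scale-bound} on why the supremum may be taken over $\widetilde{W}_{B}$ rather than $(I-P)\widetilde{W}_{B}$ --- which is exactly the bridge you build before reproducing the abstract eigenvalue bound and the face-by-face logarithmic estimate. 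The one step you leave implicit, and which the paper (and Tu's Lemma~2, cf.\ Lemma~\ref{lem:divergence-free}) treats as a separate prerequisite, is that the preconditioner of Algorithm~\ref{alg:two-level} actually maps the balanced (divergence-free) subspace into itself under Assumption~\ref{ass:enough-constraints}; this is what licenses restricting the spectral analysis of the preconditioned operator to $\widehat{W}_{B}$ in the first place, and your remark that the quotient attains $1$ on continuous $w$ needs $w$ to be Stokes harmonic as well, a harmless imprecision given the outer $\max\{\cdot,1\}$.
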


\begin{remark}
\label{rem:two-scale-bound}In~\cite[Lemma 8]{Tu-2005-BAM}, the supremum was
taken over the space $\left(  I-P\right)  \widetilde{W}_{B}$ of Stokes
harmonic balanced function. Nevertheless, the bound remains the same by
considering the larger space $\widetilde{W}_{B}$, cf. also~\cite[Remark
16]{Mandel-2008-MMB}. 
\end{remark}

In Algorithm~\ref{alg:two-scale}, the coarse problem used in Steps 1 and 3 is
solved exactly, and therefore becomes a bottleneck in the case of many
substructures. In the next section we will suggest its further modification by
using it recursively for Step 1, on a multiple of different levels leading to
the Nested BDDC method.

\section{Nested {BDDC}}

\label{sec:multiscale}We extend Algorithm~\ref{alg:two-scale} to multiple
levels by using it recursively for Step 1, leading to a multilevel
decomposition, and introducing thus a loop of outer iterations with the size
given by the number of different decomposition levels.

\begin{figure}[ptbh]
\begin{center}
\includegraphics[width=7.4cm]{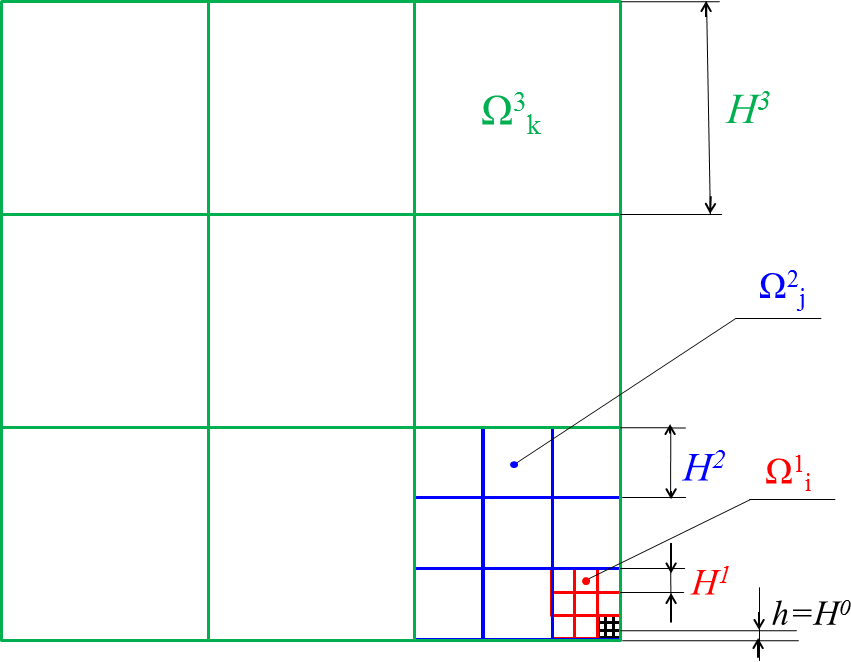}
\end{center}
\caption{An example of a uniform decomposition for a four-level method with
$H^{\ell}/H^{\ell-1}=3$.}%
\label{fig1}%
\end{figure}

\begin{figure}[ptb]
\medskip%
\[
\fbox{$%
\begin{array}
[c]{ccccccccccccccc}
&  & Q & = & Q^{0}_{0} &  &  &  &  &  &  &  &  &  & \\
&  & \shortparallel &  &  &  &  &  &  &  &  &  &  &  & \\
&  & Q^{1} & = & Q^{1}_{0} & \oplus & Q^{1}_{I} &  &  &  &  &  &  &  & \\
&  &  &  & \shortparallel &  &  &  &  &  &  &  &  &  & \\
&  &  &  & Q^{2} & = & Q^{2}_{0} & \oplus & Q^{2}_{I} &  &  &  &  &  & \\
&  &  &  &  &  & \shortparallel &  &  &  &  &  &  &  & \\
&  &  &  &  &  & \vdots &  &  &  &  &  &  &  & \\
&  &  &  &  &  & \shortparallel &  &  &  &  &  &  &  & \\
&  &  &  &  &  & Q^{L-1} & = & Q^{L-1}_{0} & \oplus & Q^{L-1}_{I} &  &  &  &
\\
&  &  &  &  &  &  &  &  &  &  &  &  &  & \\
&  & U & = & \widetilde{W}_{\Pi}^{0} &  &  &  &  &  &  &  &  &  & \\
&  & \shortparallel &  &  &  &  &  &  &  &  &  &  &  & \\
U_{I}^{1} & {\genfrac{}{}{0pt}{}{\genfrac{}{}{0pt}{}{P^{1}}{\leftarrow
}}{\subset}} & U^{1} & {\genfrac{}{}{0pt}{}{\genfrac{}{}{0pt}{}{E^{1}%
}{\leftarrow}}{\subset}} & \widetilde{W}_{\Pi}^{1} & \oplus & \widetilde{W}%
_{\Delta}^{1} & = & \widetilde{W}^{1} & \subset & W^{1} &  &  &  & \\
&  &  &  & \shortparallel &  &  &  &  &  &  &  &  &  & \\
&  & U_{I}^{2} & {\genfrac{}{}{0pt}{}{\genfrac{}{}{0pt}{}{P^{2}}{\leftarrow
}}{\subset}} & U^{2} & {\genfrac{}{}{0pt}{}{\genfrac{}{}{0pt}{}{E^{2}%
}{\leftarrow}}{\subset}} & \widetilde{W}_{\Pi}^{2} & \oplus & \widetilde{W}%
_{\Delta}^{2} & = & \widetilde{W}^{2} & \subset & W^{2} &  & \\
&  &  &  & {\scriptstyle\ \downarrow I^{2}} &  & \shortparallel &  &  &  &  &
&  &  & \\
&  &  &  & \widetilde{U}^{2} &  & \vdots &  &  &  &  &  &  &  & \\
&  &  &  &  &  & \shortparallel &  &  &  &  &  &  &  & \\
&  &  &  & U_{I}^{L-1} & {\genfrac{}{}{0pt}{}{\genfrac{}{}{0pt}{}{P^{L-1}%
}{\leftarrow}}{\subset}} & U^{L-1} &
{\genfrac{}{}{0pt}{}{\genfrac{}{}{0pt}{}{E^{L-1}}{\leftarrow}}{\subset}} &
\widetilde{W}_{\Pi}^{L-1} & \oplus & \widetilde{W}_{\Delta}^{L-1} & = &
\widetilde{W}^{L-1} & \subset & W^{L-1}\\
&  &  &  &  &  & {\scriptstyle\ \downarrow I^{L-1}} &  & \shortparallel &  &
&  &  &  & \\
&  &  &  &  &  & \widetilde{U}^{L-1} &  & U^{L} &  &  &  &  &  & \\
&  &  &  &  &  &  &  & {\scriptstyle\ \downarrow I^{L}} &  &  &  &  &  & \\
&  &  &  &  &  &  &  & \widetilde{U}^{L} &  &  &  &  &  &
\end{array}
$}%
\]
\medskip\caption{Space decompositions, embeddings and projections in the
Nested and Multilevel BDDC for a saddle-point problem described in
Algorithm~\ref{alg:multiscale} and Algorithm~\ref{alg:multilevel-bddc},
respectively. Note that the spaces~$\protect\widetilde{W}_{\Pi}^{\ell}$,
$\ell=1,\dots,L-1$ in the Multilevel BDDC are by (\ref{eq:b_wPi_q0}) also
balanced. However in order to guarantee that the output of the Multilevel BDDC
preconditioner is also balanced, resp. divergence-free in the sense of
eq.~(\ref{eq:Stokes-full-2}), we need to satisfy
Assumption~\ref{ass:enough-constraints}.}%
\label{fig:multi-bddc}%
\end{figure}

The substructuring components from Section~\ref{sec:two-scale} will be denoted
by an additional superscript~$^{1},$ as~$\Omega_{i}^{1},$ $i=1,\ldots N^{1}$,
etc., and called level~$1$. In particular, the problem (\ref{eq:variational-1}%
)-(\ref{eq:variational-2}) will be denoted as: find $\left(  u^{1}%
,p^{1}\right)  \in\left(  U^{1},Q^{1}\right)  $\ such that
\begin{align}
a\left(  u^{1},v^{1}\right)  +b\left(  v^{1},p^{1}\right)   &  =0,\qquad
\forall v^{1}\in U^{1},\label{eq:variational-level1-1}\\
b\left(  u^{1},q^{1}\right)   &  =\left\langle f^{1},q^{1}\right\rangle
,\qquad\forall q^{1}\in Q^{1}, \label{eq:variational-level1-2}%
\end{align}
The level~$1$ coarse problem solved in (\ref{eq:two-scale-coarse-1}%
)-(\ref{eq:two-scale-coarse-2}) will be called the level~$2$ problem. It has
the same finite element structure as the original problem
(\ref{eq:variational-1})-(\ref{eq:variational-2}) on level~$1$, so we put
$\widetilde{W}_{\Pi}^{1}=U^{2}$ and $Q_{0}^{1}=Q^{2}$. Level~$1$ substructures
are level~$2$ elements and level $1$ coarse degrees of freedom are level~$2$
degrees of freedom. Repeating this process recursively, level~$\ell-1$
substructures become level~$\ell$ elements, and the level~$\ell$ substructures
are agglomerates of level~$\ell$ elements. An $L-$level method is thus given
by nested decomposition levels $\ell=1,\dots,L-1$. Level~$\ell$ substructures
are denoted by $\Omega_{i}^{\ell},$ $i=1,\ldots,N^{\ell},$ and they are
assumed to form a conforming triangulation with a characteristic substructure
size $H^{\ell}$. An example of a decomposition is in Figure~\ref{fig1}. For
convenience, we denote by~$\Omega_{i}^{0}$ the original finite elements and
put $H^{0}=h$. The interface$~\Gamma^{\ell}$\ on level$~\ell$ is defined as
the union of all level$~\ell$ boundary degrees of freedom, i.e., degrees of
freedom shared by at least two level$~\ell$ substructures, and we note that
$\Gamma^{\ell}\subset\Gamma^{\ell-1}$. Level$~\ell-1$ coarse degrees of
freedom become level$~\ell$ degrees of freedom. The shape functions on
level~$\ell$ are Stokes harmonic with respect to level~$\ell-1$ shape
functions, subject to the value of exactly one level$~\ell$ degree of freedom
being one and others level$~\ell$ degrees of freedom being zero. We remark
that as before the coarse degrees of freedom will be the flux averages over
each face, and pressure averages over each substructure, cf.
Assumption~\ref{ass:enough-constraints}.\ The (Stokes harmonic) projection is
performed on each level$~\ell$ element (level$~\ell-1$ substructure)
separately, so the values of level$~\ell-1$ degrees of freedom are in general
discontinuous between level$~\ell-1$ substructures, and only the values of
level$~\ell$\ degrees of freedom between neighboring level~$\ell$\ elements coincide.

The development of the spaces on level $\ell$ now parallels the finite element
setting in Section \ref{sec:two-scale}, see also~\cite[Section 6]%
{Mandel-2008-MMB}. First, let us consider similarly as before,
cf.~eq.~(\ref{eq:Q-dec}),\ the recursive decomposition of the pressure spaces
\begin{equation}
Q^{\ell}=Q_{0}^{\ell}\oplus Q_{I}^{\ell}\text{,\quad and\quad}Q_{I}^{\ell
}=Q_{1}^{\ell}\times\cdots\times Q_{N^{\ell}}^{\ell},\text{\qquad}\ell
=1,\dots,L-1, \label{eq:Q-dec-levels}%
\end{equation}
where $Q_{0}^{\ell}$ consists of constant functions in each level $\ell$
substructure, such that%
\[
\int_{\Omega^{\ell}}q_{0}^{\ell}\,dx=0,\quad\forall q_{0}^{\ell}\in Q_{0}^{\ell},
\quad\text{and}\quad\int_{\Omega_{i}^{\ell}}q_{i}^{\ell}\,dx=0,\quad\forall
q_{i}^{\ell}\in Q_{i}^{\ell},\;i=1,\dots,N^{\ell}.
\]

Next, denote $U^{\ell}=\widetilde{W}_{\Pi}^{\ell-1}$. Let $W_{i}^{\ell}$ be
the space of the flux functions\ on the substructure $\Omega_{i}^{\ell}$, such
that all of their degrees of freedom on $\partial\Omega_{i}^{\ell}\cap
\partial\Omega$ are zero, and on each decomposition level$~\ell=1,\dots,L-1,$
let%
\[
W^{\ell}=W_{1}^{\ell}\times\cdots\times W_{N^{\ell}}^{\ell}.
\]
Now $U^{\ell}\subset W^{\ell}$ can be viewed as the subspace of all functions
from$~W^{\ell}$ that are continuous across the interface $\Gamma^{\ell}$.
Define $U_{I}^{\ell}\subset U^{\ell}$ as the subspace of functions that are
zero on$~\Gamma^{\ell}$, i.e., the functions \textquotedblleft
interior\textquotedblright\ to the level$~\ell$ substructures. Define
projections $P^{\ell}$:$w^{\ell}\in W^{\ell}\longmapsto\left(  u_{I}^{\ell
},p_{I}^{\ell}\right)  \in\left(  U_{I}^{\ell},Q_{I}^{\ell}\right)  $ such
that
\begin{align*}
a\left(  u_{I}^{\ell},v_{I}^{\ell}\right)  +b\left(  v_{I}^{\ell},p_{I}^{\ell
}\right)   &  =a\left(  w^{\ell},v_{I}^{\ell}\right)  ,\quad\forall
v_{I}^{\ell}\in U_{I}^{\ell}\\
b\left(  u_{I}^{\ell},q_{I}^{\ell}\right)   &  =b\left(  w^{\ell},q_{I}^{\ell
}\right)  ,\quad\forall q_{I}^{\ell}\in Q_{I}^{\ell}.
\end{align*}
Functions from the nullspace of $P^{\ell}$ will be called Stokes harmonic on
level $\ell$. Next, let $\widehat{W}^{\ell}$ be the space of all Stokes
harmonic functions that are continuous across substructure interfaces\ on
level$~\ell$, and such that
\begin{equation}
U^{\ell}=U_{I}^{\ell}\oplus\widehat{W}^{\ell},\quad\text{and\quad}U_{I}^{\ell
}\perp_{a}\widehat{W}^{\ell}. \label{eq:int-harm-dec-levels}%
\end{equation}

Let $\widetilde{W}^{\ell}\subset W^{\ell}$ be the subspace of all functions
such that the values of any flux coarse degrees of freedom on level$~\ell$
have a common value over a face shared by a pair of adjacent level$~\ell
$\ substructures and vanish on $\partial\Omega_{i}^{\ell}\cap\partial\Omega$.
Define $\widetilde{W}_{\Pi}^{\ell}\subset\widetilde{W}^{\ell}$ as the subspace
of all functions such that their level $\ell$\ flux coarse degrees of freedom
between adjacent substructures coincide, and such that they are Stokes
harmonic, and let us also define $\widetilde{W}_{\Delta}^{\ell}\subset
W^{\ell}$ as the subspace of all functions such that their level $\ell$ flux
coarse degrees of freedom vanish. Clearly, functions in $\widetilde{W}_{\Pi
}^{\ell}$ are uniquely determined by the values of their level$~\ell$\ coarse
degrees of freedom, and
\begin{equation}
\widetilde{W}^{\ell}=\widetilde{W}_{\Delta}^{\ell}\oplus\widetilde{W}_{\Pi
}^{\ell}. \label{eq:tilde-dec-levels}%
\end{equation}
Let $E^{\ell}$ be a projection from $\widetilde{W}^{\ell}$ onto $U^{\ell}$,
defined by taking some weighted average of corresponding coarse degrees of
freedom on$~\Gamma^{\ell}$, cf. Remark~\ref{rem:averaging}$.$

These spaces and operators are used in both, Nested and Multilevel BDDC,
algorithms described below. Their hierarchy is shown concisely in
Figure~\ref{fig:multi-bddc}. We are now ready to generalize the two-level
Algorithm~\ref{alg:two-scale} to multiple levels.

\begin{algorithm}
[Nested BDDC]\label{alg:multiscale}
Find the solution $\left(  u^{1},p^{1}\right)  \in\left(  U^{1},Q^{1}\right)
$\ of the problem (\ref{eq:variational-level1-1}%
)-(\ref{eq:variational-level1-2}) in the following steps:

\noindent\textbf{for} $\ell=1,\ldots L-1$\textbf{,}

\begin{description}
\item Step 1: formulate the coarse problem as: find $\left(  w_{\Pi}^{\ell
},p_{0}^{\ell}\right)  \in\left(  \widetilde{W}_{\Pi}^{\ell},Q_{0}^{\ell
}\right)  $\ such that%
\begin{align}
a\left(  w_{\Pi}^{\ell},z_{\Pi}^{\ell}\right)  +b\left(  z_{\Pi}^{\ell}%
,p_{0}^{\ell}\right)   &  =0,\qquad\forall z_{\Pi}^{\ell}\in\widetilde{W}%
_{\Pi}^{\ell},\label{eq:alg-nested-coarse-1}\\
b\left(  w_{\Pi}^{\ell},q_{0}^{\ell}\right)   &  =\left\langle f^{\ell}%
,q_{0}^{\ell}\right\rangle ,\qquad\forall q_{0}^{\ell}\in Q_{0}^{\ell},
\label{eq:alg-nested-coarse-2}%
\end{align}

\item If $\ell=L-1$, solve the coarse problem directly, drop $p_{0}^{\ell}$,
and set $u_{0}^{L-1}=w_{\Pi}^{L-1}$.

\item Else, set $U^{\ell+1}=\widetilde{W}_{\Pi}^{\ell}$ and set up the
right-hand side of (\ref{eq:alg-nested-coarse-2}) for level $\ell+1$,%
\[
f^{\ell+1}\in Q^{\ell+1\prime},\quad\left\langle f^{\ell+1},q^{\ell
+1}\right\rangle =\left\langle f^{\ell},q^{\ell+1}\right\rangle ,\qquad\forall
q^{\ell+1}\in Q^{\ell+1},
\]

\end{description}

\noindent\textbf{end} \pagebreak

\noindent\textbf{for} $\ell=L-1,\ldots1$\textbf{,}

\begin{description}
\item Step 2: find the substructure components $\left(  u_{I}^{\ell}%
,p_{I}^{\ell}\right)  \in\left(  U_{I}^{\ell},Q_{I}^{\ell}\right)  $ from
\begin{align*}
a\left(  u_{I}^{\ell},v_{I}^{\ell}\right)  +b\left(  v_{I}^{\ell},p_{I}^{\ell
}\right)   &  =-a\left(  u_{0}^{\ell},v_{I}^{\ell}\right)  ,\qquad\forall
v_{I}^{\ell}\in U_{I}^{\ell},\\
b\left(  u_{I}^{\ell},q_{I}^{\ell}\right)   &  =\left\langle f^{\ell}%
,q_{I}^{\ell}\right\rangle -b\left(  u_{0}^{\ell},q_{I}^{\ell}\right)
,\qquad\forall q_{I}^{\ell}\in Q_{I}^{\ell},
\end{align*}
\newline drop $p_{I}^{\ell}$, and combine the two solutions
\[
u^{\ast,\ell}=u_{0}^{\ell}+u_{I}^{\ell}.
\]

\item Step 3: find the correction and the pressure $\left(  u_{\text{corr}%
}^{\ell},p^{\ell}\right)  \in\left(  U^{\ell},Q^{\ell}\right)  $ from%
\begin{align*}
a\left(  u_{\text{corr}}^{\ell},v^{\ell}\right)  +b\left(  v^{\ell},p^{\ell
}\right)   &  =-a\left(  u_{0}^{\ast,\ell},v^{\ell}\right)  ,\qquad\forall
v^{\ell}\in U^{\ell},\\
b\left(  u_{\text{corr}}^{\ell},q^{\ell}\right)   &  =0,\qquad\forall q^{\ell
}\in Q^{\ell}.
\end{align*}
Specifically, use the PCG method with the {M}ultilevel {BDDC} preconditioner
defined in Algorithm~\ref{alg:multilevel-bddc}, using the hierarchy of coarse
problems (\ref{eq:alg-nested-coarse-1})-(\ref{eq:alg-nested-coarse-2}).

\item Finally, combine the three solutions as
\[
u^{\ell}=u_{0}^{\ell}+u_{I}^{\ell}+u_{\text{corr}}^{\ell}.
\]

\item If $\ell>1$, drop $p^{\ell}$, and 
set $u_{0}^{\ell-1}=u^{\ell}$.

\end{description}

\noindent\textbf{end}
\end{algorithm}

We note that the first loop provides a natural approach of scaling-up through
the levels.
The Multilevel BDDC\ preconditioner used in Step~3 of
Algorithm~\ref{alg:multiscale} consists of recursive application of the
two-level BDDC\ preconditioner for the approximate solution of the hierarchy
of the coarse problems that were pre-computed in Step~1. Even though the
preconditioner differs only little from its original version for elliptic
problems described in~\cite[Algorithm~17]{Mandel-2008-MMB}, we again include
its saddle-point version here for completeness.

\begin{algorithm}
[Multilevel BDDC preconditioner]\label{alg:multilevel-bddc} Define the
preconditioner $\left(  r^{\ell},0\right)  \in\left(  U^{\ell\prime}%
,Q^{\ell\prime}\right)  \longmapsto\left(  u^{\ell},p^{\ell}\right)
\in\left(  U^{\ell},Q^{\ell}\right)  $\ as follows:

\noindent\textbf{for }$k=\ell,\ldots,L-1$\textbf{,}

\begin{description}
\item Compute the interior pre-correction $\left(  u_{I}^{k},p_{I}^{k}\right)
\in\left(  U_{I}^{k},Q_{I}^{k}\right)  $ from%
\begin{align}
a\left(  u_{I}^{k},v_{I}^{k}\right)  +b\left(  v_{I}^{k},p_{I}^{k}\right)   &
=\left\langle r^{k},v_{I}^{k}\right\rangle ,\qquad\forall v_{I}^{k}\in
U_{I}^{k},\label{eq:ML-I-precorr-1}\\
b\left(  u_{I}^{k},q_{I}^{k}\right)   &  =0,\qquad\forall q_{I}^{k}\in
Q_{I}^{k}. \label{eq:ML-I-precorr-2}%
\end{align}

\item Set up the updated residual%
\[
r_{B}^{k}\in U^{k\prime},\quad\left\langle r_{B}^{k},v^{k}\right\rangle
=\left\langle r^{k},v^{k}\right\rangle -\left[  a\left(  u_{I}^{k}%
,v^{k}\right)  +b\left(  v^{k},p_{I}^{k}\right)  \right]  ,\qquad\forall
v^{k}\in U^{k}.
\]

\item Compute the substructure correction $\left(  w_{\Delta}^{k},p_{I\Delta
}^{k}\right)  \in\left(  \widetilde{W}_{\Delta}^{k},Q_{I}^{k}\right)  $ from
\begin{align*}
a\left(  w_{\Delta}^{k},z_{\Delta}^{k}\right)  +b\left(  z_{\Delta}%
^{k},p_{I\Delta}^{k}\right)   &  =\left\langle r_{B}^{k},E^{k}z_{\Delta}%
^{k}\right\rangle ,\qquad\forall z_{\Delta}^{k}\in\widetilde{W}_{\Delta}%
^{k},\\
b\left(  w_{\Delta}^{k},q_{I}^{k}\right)   &  =0,\qquad\forall q_{I}^{k}\in
Q_{I}^{k}.
\end{align*}

\item Formulate the coarse problem as: find $\left(  w_{\Pi}^{k},p_{0}%
^{k}\right)  \in\left(  \widetilde{W}_{\Pi}^{k},Q_{0}^{k}\right)  $ such that
\begin{align}
a\left(  w_{\Pi}^{k},z_{\Pi}^{k}\right)  +b\left(  z_{\Pi}^{k},p_{0}%
^{k}\right)   &  =\left\langle r_{B}^{k},E^{k}z_{\Pi}^{k}\right\rangle
,\quad\forall z_{\Pi}^{k}\in\widetilde{W}_{\Pi}^{k},\label{eq:ML-coarse}\\
b\left(  w_{\Pi}^{k},q_{0}^{k}\right)   &  =0,\quad\forall q_{0}^{k}\in
Q_{0}^{k}. \label{eq:b_wPi_q0}%
\end{align}

\item If$\ k=L-1$, solve the coarse problem directly and set%
\begin{align*}
u^{L}  &  =w_{\Pi}^{L-1},\\
p^{L}  &  =p_{0}^{L-1}.
\end{align*}

\item Else, set $U^{k+1}=\widetilde{W}_{\Pi}^{k}$, set up the right-hand side
$r^{k+1}$ of (\ref{eq:ML-I-precorr-1}) for level$~k+1$,%
\[
r^{k+1}\in U^{k+1^{\prime}},\quad\left\langle r^{k+1},v^{k+1}\right\rangle
=\left\langle r_{B}^{k},E^{k}v^{k+1}\right\rangle ,\quad\forall v^{k+1}\in
U^{k+1},\label{eq:ML-ri+1}%
\]

\end{description}

\noindent\textbf{end}

\noindent\textbf{for }$k=L-1,\ldots,\ell\mathbf{,}$\textbf{\ }

\begin{description}
\item Average the approximate corrections,%
\begin{align}
u_{B}^{k}  &  =E^{k}\left(  w_{\Delta}^{k}+u^{k+1}\right)  ,\label{eq:uB^k}\\
p_{0}^{k}  &  =p^{k+1}.
\end{align}

\item Compute the interior post-correction $\left(  v_{I}^{k},q_{I}%
^{k}\right)  \in\left(  U_{I}^{k},Q_{I}^{k}\right)  $ from
\begin{align}
a\left(  v_{I}^{k},z_{I}^{k}\right)  +b\left(  z_{I}^{k},q_{I}^{k}\right)   &
=a\left(  u_{B}^{k},z_{I}^{k}\right)  ,\quad\forall z_{I}^{k}\in U_{I}%
^{k},\label{eq:ML-vIi}\\
b\left(  v_{I}^{k},\overline{q}_{I}^{k}\right)   &  =b\left(  u_{B}%
^{k},\overline{q}_{I}^{k}\right)  ,\quad\forall\overline{q}_{I}^{k}\in
Q_{I}^{k}. \label{eq:ML-pIi}%
\end{align}

\item Apply the combined corrections,
\begin{align}
u^{k}  &  =u_{I}^{k}+u_{B}^{k}-v_{I}^{k},\label{eq:u_k}\\
p^{k}  &  =p_{I}^{k}+p_{0}^{k}-q_{I}^{k}.
\end{align}

\end{description}

\noindent\textbf{end}
\end{algorithm}

In order to guarantee that the Multilevel BDDC\ preconditioner is invariant on
the space of divergence-free functions, we will need the following:

\begin{assumption}
\label{ass:enough-constraints}Suppose that the flux coarse degrees of freedom
are prescribed as averages over every face on every decomposition level$~\ell
$, $\ell=1,\dots,L-1$.
\end{assumption}

\begin{lemma}
\label{lem:constraints}
Let Assumption~\ref{ass:enough-constraints} be satisfied. Then,
\begin{align*}
b\left(  E^{\ell}w_{\Delta}^{\ell},q_{0}^{\ell}\right)   &  =0,\quad
\forall\left(  w_{\Delta}^{\ell},q_{0}^{\ell}\right)  \in\left(
\widetilde{W}_{\Delta}^{\ell},Q_{0}^{\ell}\right)  ,\\
b\left(  E^{\ell}w_{\Pi}^{\ell},q_{0}^{\ell}\right)   &  =b\left(  w_{\Pi
}^{\ell},q_{0}^{\ell}\right)  ,\quad\forall\left(  w_{\Pi}^{\ell},q_{0}^{\ell
}\right)  \in\left(  \widetilde{W}_{\Pi}^{\ell},Q_{0}^{\ell}\right)  .
\end{align*}
\end{lemma}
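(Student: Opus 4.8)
The plan is to reduce both identities to a single fact: a function in $W^{\ell}$ whose flux coarse degrees of freedom all vanish is annihilated by $b(\cdot,q_{0}^{\ell})$ for every $q_{0}^{\ell}\in Q_{0}^{\ell}$. The natural starting point is the divergence theorem, exactly as in the derivation of~(\ref{eq:b-orth}). Since any $q_{0}^{\ell}\in Q_{0}^{\ell}$ is constant on each substructure $\Omega_{i}^{\ell}$, with value $q_{0,i}^{\ell}$, and since functions of $W_{i}^{\ell}$ are $H(\operatorname{div})$-conforming inside $\Omega_{i}^{\ell}$, I would write, for arbitrary $w\in W^{\ell}$,
\[
b(w,q_{0}^{\ell})=-\sum_{i=1}^{N^{\ell}}q_{0,i}^{\ell}\int_{\Omega_{i}^{\ell}}\nabla\cdot w\,dx=-\sum_{i=1}^{N^{\ell}}q_{0,i}^{\ell}\sum_{F\subset\partial\Omega_{i}^{\ell}}\int_{F}w\cdot n\,ds,
\]
where $\partial\Omega_{i}^{\ell}$ is split into faces $F$ and the pieces lying on $\partial\Omega$ drop out because the degrees of freedom there vanish.

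Next I would invoke Assumption~\ref{ass:enough-constraints}: on each face $F$ the flux coarse degree of freedom is the average of the normal flux across $F$, so $\int_{F}w\cdot n\,ds$ equals the fixed face measure times that coarse degree of freedom. Consequently $b(w,q_{0}^{\ell})$ is a linear functional of the flux coarse degrees of freedom of $w$ alone. In particular, if all flux coarse degrees of freedom of $w$ vanish, then every face integral vanishes and $b(w,q_{0}^{\ell})=0$; this is the fact announced above.

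It then remains to track how $E^{\ell}$ acts on the flux coarse degrees of freedom. Because $E^{\ell}$ replaces the two one-sided values on a shared face by their weighted average with weights $e_{i}(x)$ satisfying $e_{i}+e_{j}=1$ (Remark~\ref{rem:averaging}), and because these weights are constant on the face, the coarse degree of freedom of $E^{\ell}w$ on that face is simply $e_{i}c_{i}(w)+e_{j}c_{j}(w)$, the weighted average of the two one-sided coarse degrees of freedom $c_{i}(w),c_{j}(w)$ of $w$. For $w_{\Delta}^{\ell}\in\widetilde{W}_{\Delta}^{\ell}$ both one-sided values are zero, so $E^{\ell}w_{\Delta}^{\ell}$ again has vanishing flux coarse degrees of freedom, and the fact gives the first identity. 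For $w_{\Pi}^{\ell}\in\widetilde{W}_{\Pi}^{\ell}$ the two one-sided values coincide, whence $e_{i}c_{i}+e_{j}c_{j}=c_{i}$ and $E^{\ell}$ leaves the coarse degrees of freedom unchanged; thus $(E^{\ell}-I)w_{\Pi}^{\ell}$ has vanishing flux coarse degrees of freedom, and the fact applied to it yields $b(E^{\ell}w_{\Pi}^{\ell},q_{0}^{\ell})=b(w_{\Pi}^{\ell},q_{0}^{\ell})$, the second identity.

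I expect the main obstacle to be the first step, namely justifying rigorously that $b(\cdot,q_{0}^{\ell})$ sees only the flux coarse degrees of freedom in the recursive, multilevel setting: one must ensure that the element-wise divergence theorem is applied to the genuine (unfolded) fine-level flux field, that the interior-face contributions within each $\Omega_{i}^{\ell}$ cancel by conformity, and that the identification of $\int_{F}w\cdot n$ with the prescribed coarse degree of freedom carries the correct normalization through all levels. Once this bookkeeping is in place, the remaining algebra with the partition-of-unity weights $e_{i}+e_{j}=1$ is routine.
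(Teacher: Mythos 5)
Your proposal is correct and follows essentially the same route as the paper's own (much terser) proof: reduce everything via the divergence theorem and the piecewise constancy of $q_{0}^{\ell}$ to a statement about the flux coarse degrees of freedom, then use that these vanish for $\widetilde{W}_{\Delta}^{\ell}$ and coincide across interfaces for $\widetilde{W}_{\Pi}^{\ell}$, so that the partition-of-unity weights of $E^{\ell}$ preserve them. The only difference is that you make explicit the bookkeeping (face integrals, the identity $e_{i}+e_{j}=1$) that the paper leaves implicit.
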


\begin{proof}
Note that with Assumption~\ref{ass:enough-constraints} satisfied, the
values of coarse degrees of freedom of functions from the space $\widetilde
{W}_{\Delta}^{\ell}$ are zero, i.e., the fine degrees of freedom have a zero average,
and the values of coarse degrees of\ freedom
for functions from the space $\widetilde{W}_{\Pi}^{\ell}$ for all (pairs of)
adjacent substructures coincide. The claim now follows from the divergence theorem,
because $q_0$ are piecewise constant in each level~$\ell$ subdomain separately,
cf. also~\cite[Lemma~2]{Tu-2005-BAM}.
\qed
\end{proof}

\begin{lemma}
\label{lem:divergence-free} Let Assumption~\ref{ass:enough-constraints} be
satisfied. Then 
the solution $u^\ell$ obtained
from the Multilevel BDDC\ preconditioner in Algorithm~\ref{alg:multilevel-bddc}
is divergence-free.
\end{lemma}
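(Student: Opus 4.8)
The plan is to prove the claim by downward induction on the level index, tracking the backward loop $k=L-1,\ldots,\ell$ of Algorithm~\ref{alg:multilevel-bddc}. The goal is to show $b(u^{\ell},q^{\ell})=0$ for all $q^{\ell}\in Q^{\ell}$, and since $Q^{k}=Q_{0}^{k}\oplus Q_{I}^{k}$ I would verify the contributions against $Q_{I}^{k}$ and against $Q_{0}^{k}$ separately. In fact I would prove the slightly stronger inductive statement: for every $k=L,\ldots,\ell$ the intermediate output satisfies $b(u^{k},q^{k})=0$ for all $q^{k}\in Q^{k}$. The base case $k=L$ is immediate: the top coarse problem is solved directly with $u^{L}=w_{\Pi}^{L-1}$, and because of the recursive identification $Q_{0}^{k}=Q^{k+1}$ we have $Q^{L}=Q_{0}^{L-1}$, so (\ref{eq:b_wPi_q0}) gives $b(u^{L},q^{L})=b(w_{\Pi}^{L-1},q_{0}^{L-1})=0$ directly.

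For the inductive step I would assume $b(u^{k+1},q^{k+1})=0$ for all $q^{k+1}\in Q^{k+1}$ and recall $u^{k}=u_{I}^{k}+u_{B}^{k}-v_{I}^{k}$ with $u_{B}^{k}=E^{k}(w_{\Delta}^{k}+u^{k+1})$. The test against $q_{I}^{k}\in Q_{I}^{k}$ is routine and uses only the interior corrections: the pre-correction (\ref{eq:ML-I-precorr-2}) gives $b(u_{I}^{k},q_{I}^{k})=0$, the post-correction (\ref{eq:ML-pIi}) gives $b(v_{I}^{k},q_{I}^{k})=b(u_{B}^{k},q_{I}^{k})$, and the two interior-boundary terms cancel, leaving $b(u^{k},q_{I}^{k})=0$.

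The test against $q_{0}^{k}\in Q_{0}^{k}$ is the heart of the matter. Here the interior pieces $u_{I}^{k},v_{I}^{k}\in U_{I}^{k}$ drop out by the divergence-theorem identity (\ref{eq:b-orth}) applied on level $k$, so $b(u^{k},q_{0}^{k})=b(u_{B}^{k},q_{0}^{k})$. Splitting $u_{B}^{k}=E^{k}w_{\Delta}^{k}+E^{k}u^{k+1}$, the first term vanishes by the first identity of Lemma~\ref{lem:constraints}; for the second, since $u^{k+1}\in\widetilde{W}_{\Pi}^{k}$, the second identity of Lemma~\ref{lem:constraints} gives $b(E^{k}u^{k+1},q_{0}^{k})=b(u^{k+1},q_{0}^{k})$. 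The induction then closes on the single key observation that $q_{0}^{k}\in Q_{0}^{k}=Q^{k+1}$, so $b(u^{k+1},q_{0}^{k})$ is exactly a quantity covered by the inductive hypothesis and therefore vanishes. Combining the two cases yields $b(u^{k},q^{k})=0$, and taking $k=\ell$ completes the proof.

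The only genuine difficulty—commuting the averaging operator $E^{k}$ past the form $b$ when tested against the coarse pressure space—is precisely what Lemma~\ref{lem:constraints} packages, so once that lemma is in hand the rest is bookkeeping. The point that most deserves care is the identification $Q_{0}^{k}=Q^{k+1}$ that allows the inductive hypothesis at the level below to absorb the coarse pressure test function; this is exactly the recursive structure built in when setting $U^{k+1}=\widetilde{W}_{\Pi}^{k}$. Assumption~\ref{ass:enough-constraints} enters only as the hypothesis of Lemma~\ref{lem:constraints}, ensuring that enough face averages are constrained for the commutation identities to hold on every level.
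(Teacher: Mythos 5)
Your proof is correct and follows essentially the same route as the paper's: the same splitting $Q^{k}=Q_{0}^{k}\oplus Q_{I}^{k}$, the same use of Lemma~\ref{lem:constraints} together with (\ref{eq:b-orth}), (\ref{eq:ML-I-precorr-2}), (\ref{eq:ML-pIi}) and (\ref{eq:b_wPi_q0}). The only difference is that you make the downward induction on $k$ explicit, which is actually what justifies $b(E^{k}u^{k+1},q_{0}^{k})=0$ when $u^{k+1}$ is merely the approximate coarse correction returned by the recursion rather than the exact $w_{\Pi}^{k}$; the paper's proof works at a single fixed level and leaves this recursive step implicit, so your version is, if anything, the more complete one.
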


\begin{proof}
Let $\ell=1,\dots,L-1$ be fixed. Using (\ref{eq:uB^k}),
Lemma~\ref{lem:constraints} and (\ref{eq:b_wPi_q0}), we~get%
\begin{equation}
b\left(  u_{B}^{\ell},q_{0}^{\ell}\right)  =b\left(  E^{\ell}w^{\ell}%
,q_{0}^{\ell}\right)  =b\left(  w_{\Pi}^{\ell},q_{0}^{\ell}\right)
=0,\quad\forall q_{0}^{\ell}\in Q_{0}^{\ell},\label{eq:b_uB=b_wPi}%
\end{equation}
which also shows that $u_{B}^{\ell}\in\widehat{W}_{B}^{\ell}$. Next, using
(\ref{eq:u_k}) and (\ref{eq:Q-dec-levels}), we obtain
\[
b\left(  u^{\ell},q^{\ell}\right)  =b\left(  u_{I}^{\ell}+u_{B}^{\ell}%
-v_{I}^{\ell},q_{0}^{\ell}+q_{I}^{\ell}\right)  =0,\quad\forall q^{\ell}\in
Q^{\ell} ,%
\]
which follows using (\ref{eq:b-orth}), (\ref{eq:ML-I-precorr-2}),
(\ref{eq:ML-pIi}), and (\ref{eq:b_uB=b_wPi}), i.e., $u^{\ell}$ is
divergence-free. \qed
\end{proof}

Thus with a careful choice of the initial solution, such that the residual
corresponding to the substructure interiors and pressures is zero,
the output of the Multilevel BDDC preconditioner is divergence-free and by
induction all the PCG\ iterates, which are linear combinations of the initial
error and the outputs of the preconditioner, stay in the divergence-free subspace.

In order to provide the condition number bound of the Multilevel BDDC\ for a
saddle-point problem studied here, let us define, for levels $\ell
=1,\dots,L-1$, a~hierarchy of \emph{balanced} spaces
\[
\widetilde{W}_{B}^{\ell}=\left\{  w^{\ell}\in\widetilde{W}^{\ell}:b\left(
w^{\ell},q_{0}^{\ell}\right)  =0,\quad\forall q_{0}^{\ell}\in Q_{0}^{\ell
}\right\}  .
\]
The following condition number bound is a variant of~\cite[Lemma
20]{Mandel-2008-MMB}.

\begin{lemma}
\label{lem:bddc-ML-estimate}If for some $\omega^{\ell}\geq1$,
\begin{equation}
\left\Vert (I-P^{\ell})E^{\ell}w^{\ell}\right\Vert _{a}^{2}\leq\omega^{\ell
}\left\Vert w^{\ell}\right\Vert _{a}^{2},\quad\forall w^{\ell}\in\widetilde
{W}_{B,}^{\ell}\quad\ell=1,\ldots,L-1,\label{eq:est-omega-k}%
\end{equation}
then the Multilevel BDDC preconditioner (Algorithm~\ref{alg:multilevel-bddc}) satisfies $\kappa\leq%
{\textstyle\prod_{\ell=1}^{L-1}}
\omega^{\ell}.$
\end{lemma}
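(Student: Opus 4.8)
The plan is to establish the multilevel bound by induction on the number of levels, reducing the $L$-level preconditioner to a two-level preconditioner whose coarse solve is replaced by one application of the $(L-1)$-level preconditioner. First I would set up the induction so that the two-level result, Theorem~\ref{thm:two-level-bound}, serves as the base case ($L=2$), where $\kappa\leq\omega^{1}$ is exactly the bound in~(\ref{eq:two-scale-bound}) with $\omega=\omega^{1}$. The key structural observation is that Algorithm~\ref{alg:multilevel-bddc} is recursive: on level $\ell$ it performs interior pre-correction, subdomain correction in $\widetilde{W}_{\Delta}^{\ell}$, and a coarse problem in $\widetilde{W}_{\Pi}^{\ell}=U^{\ell+1}$, and the coarse problem~(\ref{eq:ML-coarse})-(\ref{eq:b_wPi_q0}) is solved not exactly but by invoking the very same preconditioner starting at level $\ell+1$. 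Thus the inexactness of the coarse solve is precisely the error incurred by the lower-level preconditioner, and the total condition number factorizes across levels.

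The core of the argument is the abstract framework for multilevel BDDC from~\cite[Section 7]{Mandel-2008-MMB}, which I would adapt using the norm equivalences already available here. Concretely, the two-level estimate~(\ref{eq:two-scale-bound}) shows that replacing the exact solve by the averaged BDDC operator inflates the energy by at most the factor $\omega^{\ell}=\sup_{w\in\widetilde{W}_{B}^{\ell}}\lVert(I-P^{\ell})E^{\ell}w\rVert_{a}^{2}/\lVert w\rVert_{a}^{2}$, which is exactly the hypothesis~(\ref{eq:est-omega-k}). When the coarse solve at level $\ell$ is itself only approximate — carried out by the $(L-1-\ell)$-level preconditioner with condition number bounded by $\prod_{k=\ell+1}^{L-1}\omega^{k}$ by the inductive hypothesis — the two effects compose multiplicatively, giving $\kappa\leq\omega^{\ell}\cdot\prod_{k=\ell+1}^{L-1}\omega^{k}=\prod_{k=\ell}^{L-1}\omega^{k}$. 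The reason the factors multiply rather than add is that each level contributes a spectral equivalence between the exact Schur complement and the preconditioned operator, and composing equivalences multiplies the constants. Here I would rely on Lemma~\ref{lem:norm-equiv} and the equivalence~(\ref{eq:norm-equiv}) to transfer the scalar-elliptic estimates of~\cite{Mandel-2008-MMB} to the present saddle-point setting, the whole point being that the preconditioned operator stays invariant on the divergence-free subspace by Lemma~\ref{lem:divergence-free}, so the $a$-norm is the relevant norm throughout.

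The main obstacle I expect is verifying that the abstract multilevel induction genuinely applies here, i.e. that the level-$\ell$ coarse problem really has the same structure as a fresh top-level problem so that the inductive hypothesis is invocable. This requires checking that $\widetilde{W}_{\Pi}^{\ell}=U^{\ell+1}$ together with the pressure decomposition~(\ref{eq:Q-dec-levels}) reproduces on level $\ell+1$ exactly the space configuration assumed on level $1$, including that the relevant functions remain balanced — which is where Assumption~\ref{ass:enough-constraints} and Lemma~\ref{lem:constraints} enter to guarantee $\widetilde{W}_{\Pi}^{\ell}\subset\widetilde{W}_{B}^{\ell}$ and hence that the sup in $\omega^{\ell}$ is taken over the correct space. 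The delicate part is confirming that the error propagated from the lower levels lands in $\widetilde{W}_{B}^{\ell}$, so that~(\ref{eq:est-omega-k}) can be applied to it; this follows from the divergence-free invariance established above. Once the recursive structure and the balanced-subspace invariance are in place, the multiplicative accumulation of the $\omega^{\ell}$ is a direct consequence of composing the per-level spectral equivalences, and the stated bound $\kappa\leq\prod_{\ell=1}^{L-1}\omega^{\ell}$ follows.
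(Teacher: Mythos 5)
Your plan is essentially the paper's proof: the multiplicative composition of the per-level estimates is exactly \cite[Lemma 20]{Mandel-2008-MMB}, which the paper cites rather than re-derives, and the single new ingredient is verifying that $(I-P^{\ell})E^{\ell}$ maps $\widetilde{W}_{B}^{\ell}$ into the balanced subspace so that~(\ref{eq:est-omega-k}) can be applied at every level. The paper carries out that check by the direct computation $b\left(  (I-P^{\ell})E^{\ell}w^{\ell},q_{0}^{\ell}\right)  =b\left(  w_{\Pi}^{\ell},q_{0}^{\ell}\right)  =0$ via Lemma~\ref{lem:constraints}, (\ref{eq:b-orth}) and (\ref{eq:b_wPi_q0}) -- the precise form of the balanced-invariance step you identify (and more to the point than invoking Lemma~\ref{lem:divergence-free}).
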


\begin{proof}
The bound was given for all $w^{\ell}\in$ $\widetilde{W}^{\ell}$ in the
context of scalar elliptic problems in~\cite[Lemma 20]{Mandel-2008-MMB}. Here, we
need to show that for any $w^{\ell}\in\widetilde{W}_{B}^{\ell}$, the bilinear
form $b$ will vanish also for the function on the left hand-side, i.e., that $\left(  I-P^{\ell
}\right)  E^{\ell}w^{\ell}\in\widetilde{W}_{B}^{\ell}$.
So, consider~(\ref{eq:tilde-dec-levels}) and let $w^{\ell
}=w_{\Delta}^{\ell}+w_{\Pi}^{\ell}$. Then
\[
b\left(  \left(  I-P^{\ell}\right)  E^{\ell}w^{\ell},q_{0}^{\ell}\right)
=b\left(  \left(  I-P^{\ell}\right)  w_{\Pi}^{\ell},q_{0}^{\ell}\right)
=b\left(  w_{\Pi}^{\ell},q_{0}^{\ell}\right)  =0,
\]
which follows from Lemma~\ref{lem:constraints},
definition of $P^{\ell}$ and (\ref{eq:b-orth}), and from (\ref{eq:b_wPi_q0}).
\qed
\end{proof}


\section{Condition number bound for the model problem}

\label{sec:condition}

We will now apply the methodology from~\cite{Mandel-2008-MMB} in order to
derive a condition number bound for the model problem with the lowest order
Raviart-Thomas dicretization. The key is the lower bound derived by
Tu~\cite{Tu-2011-TBA}, which is limited to a geometric decomposition of the
domain $\Omega$\ on every decomposition level. In particular, let us make the following:

\begin{assumption}
\label{ass:geometry}Each subdomain $\Omega_{i}^{\ell},$ $\ell=0,\dots
,L-1$\ and $i=1,\dots,N^{\ell}$\ is quadrilateral. The subdomains also form on
every decomposition level $\ell$\ a quasi-uniform coarse mesh of the domain
$\Omega$ with a characteristic mesh size $H^{\ell}$.
\end{assumption}

First, note that by (\ref{eq:b_wPi_q0}), on each level $\ell=0,\dots,L-1$,  
the coarse basis functions are \emph{balanced}, i.e., 
for all $w_\Pi \in \widetilde{W}_\Pi^\ell$ 
we have that 
\[
b \left( w_\Pi, q_0   \right) =0, \qquad \forall q_0\in Q_0^\ell, 
\]
and we can use the $a-$norm, which is also equivalent to $L^{2}-$norm, on the
space $\widetilde{W}_{\Pi}^{\ell}$. So, let $\left\Vert w\right\Vert
_{a(\Omega_{i}^{\ell})}$ be the energy norm of a function $w\in\widetilde{W}%
_{\Pi}^{\ell}$,\ $\ell=1,\ldots,L-1,$\ restricted to subdomain$~\Omega
_{i}^{\ell},$ $i=1,\ldots N^{\ell}$, and let $\left\Vert w\right\Vert _{a}%
$\ be the norm obtained by piecewise integration over each $\Omega_{i}^{\ell}%
$. To apply Lemma~\ref{lem:bddc-ML-estimate} to our model problem, we need to
generalize the polylogarithmic estimate from Theorem~\ref{thm:two-level-bound}
to coarse levels. To this end, let $I^{\ell+1}:\widetilde{W}_{\Pi}^{\ell
}\rightarrow\widetilde{U}^{\ell+1}$ be an interpolation from the level$~\ell$
coarse degrees of freedom (i.e., level $\ell+1$ degrees of freedom) to
functions in another space $\widetilde{U}^{\ell+1}$ and assume that, for all
levels $\ell=1,\ldots,L-1,$ and level $\ell$ subdomains $\Omega_{i}^{\ell}$,
$i=1,\ldots,N^{\ell},$ the interpolation satisfies for all $w\in
\widetilde{W}_{\Pi}^{\ell}$ and for all $\Omega_{i}^{\ell+1}$ the equivalence
\begin{equation}
c_{1}^{\ell}\left\Vert I^{\ell+1}w\right\Vert _{a(\Omega_{i}^{\ell+1})}%
^{2}\leq\left\Vert I^{\ell}w\right\Vert _{a(\Omega_{i}^{\ell+1})}^{2}\leq
c_{2}^{\ell}\left\Vert I^{\ell+1}w\right\Vert _{a(\Omega_{i}^{\ell+1})}^{2},
\label{eq:Ii-equiv}%
\end{equation}
with $c_{2}^{\ell}/c_{1}^{\ell}\leq\operatorname*{const}$ bounded
independently of $H^{0},\ldots,H^{\ell+1}$.

\begin{remark}
Since $I^{1}=I$, the two norms are the same on $\widetilde{W}_{\Pi}%
^{0}=\widetilde{U}^{1}=U^{1}.$
\end{remark}

For the three-level BDDC for saddle-point problems with the RT0 finite element
discretization in two dimensions, the result of Tu~\cite[Lemma~5.5]%
{Tu-2011-TBA}, can be written in our settings for all $w\in\widetilde{W}_{\Pi
}^{1}$ and for all $\Omega_{i}^{2}$\ as%
\begin{equation}
c_{1}^{1}\left\Vert I^{2}w\right\Vert _{a(\Omega_{i}^{2})}^{2}\leq\left\Vert
w\right\Vert _{a(\Omega_{i}^{2})}^{2}\leq c_{2}^{1}\left\Vert I^{2}%
w\right\Vert _{a(\Omega_{i}^{2})}^{2}, \label{eq:one-equiv-2D}%
\end{equation}
where $I^{2}$ is an interpolation from the coarse degrees of freedom given by
the averages over substructure faces, and $c_{2}^{1}/c_{1}^{1}\leq
\operatorname*{const}$ independently of $H/h$. We note that the level~$2$
substructures are called subregions in~\cite{Tu-2011-TBA} and $I^{1}=I$.

The assumption~(\ref{eq:Ii-equiv})
allows us to generalize the polylogarithmic estimate from
Theorem~\ref{thm:two-level-bound} to coarse levels using the same approach as
in~\cite[Section 7]{Mandel-2008-MMB}.

\begin{lemma}
\label{lem:W_i operator equiv}For all substructuring levels $\ell$
$=1,\ldots,L-1$,
\begin{equation}
\left\Vert (I-P^{\ell})E^{\ell}w^{\ell}\right\Vert _{a}^{2}\leq C_{\ell
}\left(  1+\log\frac{H^{\ell}}{H^{\ell-1}}\right)  ^{2}\left\Vert w^{\ell
}\right\Vert _{a}^{2},\quad\forall w^{\ell}\in\widetilde{W}_{B}^{\ell
}.\label{eq:ML-bound}%
\end{equation}
\end{lemma}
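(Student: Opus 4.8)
The plan is to reduce the multilevel estimate~(\ref{eq:ML-bound}) to the two-level polylogarithmic bound of Theorem~\ref{thm:two-level-bound} applied \emph{one level at a time}, using the interpolation operators $I^{\ell}$ as the bridge that lets the estimate on level $\ell$ be compared against the estimate on the next coarser geometry. The essential mechanism is already laid out in the excerpt: Theorem~\ref{thm:two-level-bound} gives the bound
\[
\left\Vert (I-P)Ew\right\Vert_{a}^{2}\leq C\left(1+\log\frac{H}{h}\right)^{2}\left\Vert w\right\Vert_{a}^{2}
\]
on the finest (level~$1$) geometry, where the relevant averaging operator $E$ and Stokes-harmonic projection $P$ act on the $\widetilde W_B$ space. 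Because each level $\ell-1$ substructure becomes a level $\ell$ element, the pair $(E^{\ell},P^{\ell})$ on level $\ell$ has \emph{exactly the same algebraic form} as $(E,P)$ on level $1$, just on the coarser mesh $\mathcal{T}_{H^{\ell-1}}$ with subdomains of size $H^{\ell}$. So I would first argue that the structural hypotheses of Theorem~\ref{thm:two-level-bound} — balancedness via Lemma~\ref{lem:constraints}, the norm equivalences~(\ref{eq:norm-equiv}) and Lemma~\ref{lem:norm-equiv}, and the RT0 face-average constraints from Assumption~\ref{ass:enough-constraints} — all transfer verbatim to level $\ell$, because the level-$\ell$ problem is a genuine RT0 saddle-point problem on the coarse mesh.

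Next I would invoke Assumption~\ref{ass:geometry} (quadrilateral, quasi-uniform coarse meshes) so that Tu's lower bound~(\ref{eq:one-equiv-2D}) is available, and then propagate it through the levels using~(\ref{eq:Ii-equiv}). The point of the interpolation equivalence is that the genuine energy norm $\left\Vert w\right\Vert_{a(\Omega_{i}^{\ell+1})}$ of a coarse function, which is what appears in the abstract two-level estimate, can be replaced — up to the ratio $c_2^{\ell}/c_1^{\ell}$ bounded independently of the mesh sizes — by the norm $\left\Vert I^{\ell+1}w\right\Vert_{a(\Omega_{i}^{\ell+1})}$ of its interpolant into the fictitious fine space $\widetilde U^{\ell+1}$. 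Chaining~(\ref{eq:one-equiv-2D}) (the base case $\ell=1$) with the inductive equivalence~(\ref{eq:Ii-equiv}) allows one to express the level-$\ell$ energy in terms of the $I^{\ell}$-interpolant and keep the constants uniform, exactly as in~\cite[Section 7]{Mandel-2008-MMB}. This is what licenses using the scalar-elliptic machinery of that reference on each level separately.

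The main obstacle I anticipate is precisely this transfer: verifying that the polylogarithmic bound with the \emph{local} ratio $H^{\ell}/H^{\ell-1}$ in the logarithm (rather than the global $H^{\ell}/h$) holds on level $\ell$. The subtlety is that the coarse functions on level $\ell$ are not arbitrary fine-mesh functions but Stokes-harmonic functions determined by level $\ell$ coarse degrees of freedom, so the standard finite-element trace and extension estimates underlying the $(1+\log(H/h))^2$ bound must be applied on the coarse mesh $\mathcal{T}_{H^{\ell-1}}$, treating level-$(\ell-1)$ elements as the fine scale. The interpolation equivalence~(\ref{eq:Ii-equiv}) is exactly the hypothesis that makes this work — it guarantees that the coarse shape functions behave, in energy, like genuine fine-scale RT0 functions on the one-level-coarser mesh, so that the polylogarithmic estimate applies with $H^{\ell-1}$ playing the role of the mesh size. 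Once this localized two-level bound is established for each $\ell$, the proof concludes by simply reading off the constant $C_{\ell}(1+\log(H^{\ell}/H^{\ell-1}))^2$, and the product bound of Lemma~\ref{lem:bddc-ML-estimate} then assembles these per-level estimates into the global condition number bound.
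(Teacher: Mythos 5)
Your proposal follows essentially the same route as the paper: the paper gives no separate displayed proof of Lemma~\ref{lem:W_i operator equiv}, but the paragraph preceding it states precisely your plan --- use the interpolation equivalence~(\ref{eq:Ii-equiv}), anchored by Tu's bound~(\ref{eq:one-equiv-2D}) under Assumption~\ref{ass:geometry}, to identify the level-$\ell$ coarse space with a genuine RT0 space on the one-level-coarser mesh and then reapply the two-level estimate of Theorem~\ref{thm:two-level-bound} level by level, exactly as in~\cite[Section 7]{Mandel-2008-MMB}. Your elaboration of why the balancedness and norm-equivalence hypotheses transfer to level $\ell$, and why the logarithm contains the local ratio $H^{\ell}/H^{\ell-1}$, is consistent with the paper's intent.
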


\begin{remark}
Variants of Lemma~\ref{lem:W_i operator equiv} can be found in two special cases corresponding to
$\ell=1$ and $\ell=2$
in~\cite{Tu-2011-TBA} as Lemma~5.6 and Lemma~5.8, respectively.
\end{remark}

Comparing Lemma~\ref{lem:W_i operator equiv}\ to
Lemma~\ref{lem:bddc-ML-estimate} with $\omega^{\ell}=C_{\ell}\left(
1+\log\frac{H^{\ell}}{H^{\ell-1}}\right)  ^{2}$ we get:

\begin{theorem}
\label{thm:ML-bound}
Let Assumptions~\ref{ass:enough-constraints} and~\ref{ass:geometry} be satisfied.
Then the Multilevel BDDC peconditioner from Algorithm~\ref{alg:multilevel-bddc}
for the model saddle-point problem in
2D with RT0 finite element discretization satisfies the condition number estimate%
\[
\kappa\leq%
{\textstyle\prod_{\ell=1}^{L-1}}
C_{\ell}\left(  1+\log\frac{H^{\ell}}{H^{\ell-1}}\right)  ^{2}.
\]
\end{theorem}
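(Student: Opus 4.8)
The plan is to obtain the statement as an immediate consequence of the two lemmas already established, namely Lemma~\ref{lem:bddc-ML-estimate} (the abstract multilevel condition number bound) and Lemma~\ref{lem:W_i operator equiv} (the per-level polylogarithmic operator estimate). The only real content at this stage is to verify that the hypotheses of the abstract bound are met and that the admissibility requirement $\omega^\ell\geq1$ holds, after which the product bound follows verbatim.

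First I would set, for each level $\ell=1,\ldots,L-1$,
\[
\omega^{\ell}=C_{\ell}\left(1+\log\frac{H^{\ell}}{H^{\ell-1}}\right)^{2},
\]
and observe that Lemma~\ref{lem:W_i operator equiv} is precisely the estimate~(\ref{eq:est-omega-k}) required as the hypothesis of Lemma~\ref{lem:bddc-ML-estimate}, with this choice of $\omega^{\ell}$ and for all $w^{\ell}\in\widetilde{W}_{B}^{\ell}$. Both assumptions of the theorem feed into Lemma~\ref{lem:W_i operator equiv}: Assumption~\ref{ass:enough-constraints} guarantees, via Lemma~\ref{lem:divergence-free}, that the preconditioner output and hence all PCG iterates remain in the balanced (divergence-free) subspace, so the abstract estimate is applied on the correct space $\widetilde{W}_{B}^{\ell}$; Assumption~\ref{ass:geometry} (quadrilateral, quasi-uniform decompositions on every level) is what makes Tu's interpolation estimate~(\ref{eq:one-equiv-2D}) and the propagated equivalence~(\ref{eq:Ii-equiv}) available.

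Next I would check the admissibility condition $\omega^{\ell}\geq1$ demanded by Lemma~\ref{lem:bddc-ML-estimate}. Since the levels are nested with $H^{\ell-1}\leq H^{\ell}$, the ratio $H^{\ell}/H^{\ell-1}\geq1$, so $\log(H^{\ell}/H^{\ell-1})\geq0$ and hence $(1+\log(H^{\ell}/H^{\ell-1}))^{2}\geq1$; taking the constants $C_{\ell}\geq1$ without loss of generality then gives $\omega^{\ell}\geq1$ on every level. With the hypothesis and admissibility verified, Lemma~\ref{lem:bddc-ML-estimate} yields
\[
\kappa\leq\prod_{\ell=1}^{L-1}\omega^{\ell}=\prod_{\ell=1}^{L-1}C_{\ell}\left(1+\log\frac{H^{\ell}}{H^{\ell-1}}\right)^{2},
\]
which is the claimed bound.

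I do not expect any genuine obstacle in this final assembly; it is a one-line combination, exactly as signalled by the sentence preceding the theorem. The substantive difficulties lie upstream and are already discharged: the recursive telescoping of the per-level estimates into a product is handled in Lemma~\ref{lem:bddc-ML-estimate} (adapted from~\cite[Lemma~20]{Mandel-2008-MMB}), while the hardest ingredient---lifting the two-level polylogarithmic bound of Theorem~\ref{thm:two-level-bound} to all coarse levels---is the content of Lemma~\ref{lem:W_i operator equiv}, which in turn rests on Tu's subregion interpolation estimate~\cite[Lemma~5.5]{Tu-2011-TBA} and the norm equivalences~(\ref{eq:norm-equiv}) together with Lemma~\ref{lem:norm-equiv}, which are what allow the scalar-elliptic arguments of~\cite{Mandel-2008-MMB} to transfer to the saddle-point setting considered here.
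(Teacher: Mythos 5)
Your proposal is correct and follows exactly the route the paper takes: the theorem is obtained by inserting $\omega^{\ell}=C_{\ell}\left(1+\log\frac{H^{\ell}}{H^{\ell-1}}\right)^{2}$ from Lemma~\ref{lem:W_i operator equiv} into the abstract product bound of Lemma~\ref{lem:bddc-ML-estimate}. Your additional checks (that the estimate holds on $\widetilde{W}_{B}^{\ell}$ and that $\omega^{\ell}\geq1$) are sensible bookkeeping that the paper leaves implicit, but they do not change the argument.
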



\begin{remark}
For $L=3$ we recover the estimate by Tu~\cite[Theorem~6.2]{Tu-2011-TBA}.
We also  note that the constants $C_\ell$ in the bound depend in general on the spatial variation of the coefficient~$k$,
cf. numerical experiments in Section~\ref{sec:numerical}.
\end{remark}

\begin{corollary}
In the case of uniform coarsening, i.e. with $H^{\ell}/H^{\ell-1}=H/h$ and the
same geometry of decomposition on all levels $\ell=1,\ldots L-1,$ we get
\begin{equation}
\kappa\leq C^{L-1}\left(  1+\log H/h\right)  ^{2\left(  L-1\right)
}.\label{eq:all-same}%
\end{equation}
\end{corollary}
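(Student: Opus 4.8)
The plan is to read off (\ref{eq:all-same}) directly from the product bound of Theorem~\ref{thm:ML-bound} by specializing it to the uniform setting, so the argument is essentially a substitution together with one consolidation of constants. First I would invoke the hypothesis $H^{\ell}/H^{\ell-1}=H/h$: every logarithmic factor in the product of Theorem~\ref{thm:ML-bound} collapses to the single value $\left(1+\log H/h\right)^{2}$, independent of the level index~$\ell$. Thus the $L-1$ factors $\left(1+\log\frac{H^{\ell}}{H^{\ell-1}}\right)^{2}$ all coincide and together contribute $\left(1+\log H/h\right)^{2(L-1)}$ to the product.

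Second, I would dispose of the level-dependent constants $C_{\ell}$ coming from Lemma~\ref{lem:W_i operator equiv}. These constants are inherited from the interpolation-equivalence constants $c_{2}^{\ell}/c_{1}^{\ell}$ in (\ref{eq:Ii-equiv}) and from the polylogarithmic estimate adapted from~\cite[Section 7]{Mandel-2008-MMB}, and each of these is governed by the geometric configuration of the decomposition at the corresponding level. Since the hypothesis stipulates the same geometry of decomposition on all levels $\ell=1,\dots,L-1$, this configuration is level-independent, so one may set $C=\max_{\ell}C_{\ell}$ and bound every factor $C_{\ell}$ by the common constant~$C$. Combining the two observations turns the product in Theorem~\ref{thm:ML-bound} into $C^{L-1}\left(1+\log H/h\right)^{2(L-1)}$, which is exactly (\ref{eq:all-same}).

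The one point deserving care --- and the only step that is not a formal manipulation --- is the claim that the $C_{\ell}$ admit a single common bound. This is precisely what the phrase \emph{the same geometry of decomposition on all levels} is meant to guarantee: the quasi-uniformity constant of the coarse meshes and the stability constants of the operators $I^{\ell+1}$ are the same at every level, so the chain of estimates producing $C_{\ell}$ yields a value independent of~$\ell$. Once this is granted, no further computation is required.
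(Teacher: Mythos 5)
Your argument is correct and is exactly the derivation the paper intends (the corollary is stated without an explicit proof, as an immediate specialization of Theorem~\ref{thm:ML-bound}): substitute the uniform ratio $H^{\ell}/H^{\ell-1}=H/h$ into each logarithmic factor and use the identical geometry across levels to replace each $C_{\ell}$ by a common constant $C$. Your added remark on why the $C_{\ell}$ admit a single bound is a reasonable and correct elaboration of what the hypothesis \emph{same geometry of decomposition on all levels} is meant to supply.
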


\section{Numerical experiments}

\label{sec:numerical}

Numerical examples are presented for a Darcy's problem on a square domain in
2D discretized by the lowest order quadrilateral Raviart-Thomas finite
elements (RT0). A square domain was uniformly divided into substructures with
fixed $H^{\ell}/H^{\ell-1}$ ratio on each level $\ell$. The boundary
conditions did not allow any flux across the boundary.
The right-hand side was given by a unit source and sink in two distant corners
of the domain, so that the compatibility condition~(\ref{eq:compatibility})
was satisfied. The method has been implemented in Matlab and for the
preconditioned gradients we have used zero initial guess and stopping
criterion for a relative residual tolerance of $10^{-6}$. The results for
different coarsening ratios $H^{\ell}/H^{\ell-1}$ (the relative subdomain
size) and varying number of outer iterations given by the number of levels
$L$, are reported in Table~\ref{tab:1}. For each $L$, there were $L-1$ outer
iterations $\ell$, i.e., $\ell=1, \dots, L-1$, consisting of the three steps
described in Algorithms~\ref{alg:two-scale} and~\ref{alg:multiscale}. In the
third step the flux correction was computed by PCG with the $(\ell+1)$-level
BDDC preconditioner.

In the first set of experiments, the coefficient is set $k=1$. In this case,
the two choices of scaling in the averaging operator $E$, cf.
Remark~\ref{rem:averaging}, are exactly the same. From the results in
Table~\ref{tab:1} we can observe that with increasing number of levels, the
growth of the condition number is consistent with the prediction of
Theorem~\ref{thm:ML-bound} and in particular with formula (\ref{eq:all-same}).
Also, it appears that for a fixed number of levels the condition number grows
only mildly with increasing relative subdomain size given by the $H^{\ell
}/H^{\ell-1}$ ratio.

\begin{figure}[ptbh]
\begin{center}
\includegraphics[width=12cm]{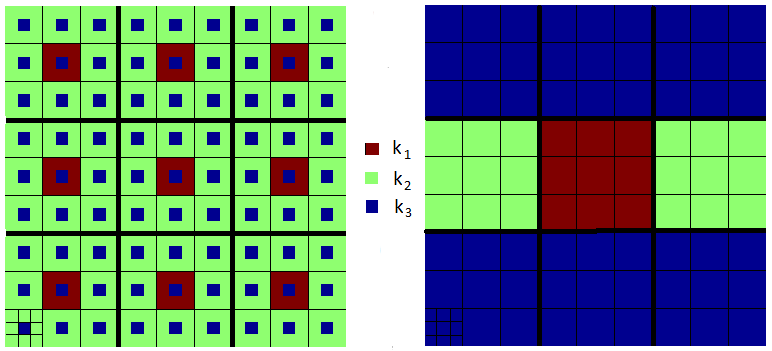}
\end{center}
\caption{The setup for the two experiments with variations in coefficients
$k_{1}$, $k_{2}$ and $k_{3}$. In both cases we have used the four-level method
with $H^{\ell}/H^{\ell-1}=3$. The pictures show three levels of decomposition
into subdomains with the first level decomposition shown only for one
level~$2$ subdomain, and the level of finite elements is not shown. The
picture on the left shows the case when the coefficient variations are
``interior'' to the substructures on the top level, and the jumps in
coefficients are aligned with the boundaries of substructures on lower levels.
The picture on the right shows the case when the jumps in coefficients are
aligned with the top level subdomain boundaries, and there are no ``interior''
variations. }%
\label{fig3}%
\end{figure}

In the second set of experiments, we have used the $\rho-$scaling and
experimented with jumps in the coefficient$~k$. In particular we have
performed two sets of experiments, both with the four level method and with
$H^{\ell}/H^{\ell-1}=3$, $\ell=0,\dots,3$, see Figure~\ref{fig3}. In the first
experiment, the coefficient variations were \textquotedblleft
interior\textquotedblright\ to the substructures on the top level, and the
jumps in coefficients were aligned with the substructure boundaries on lower
levels. In the second experiment, the jumps in coefficients were aligned with
the top level subdomain boundaries, and there were no \textquotedblleft
interior\textquotedblright\ coefficient variations. In both experiments we
have kept the coefficient $k_{2}$ fixed as $k_{2}=1$, and varied $k_{1}$ up to
$10^{2}$ and $k_{3}$ to as low as $10^{-2}$ in order to obtain a coefficient
jump of maximum order$~10^{4}$. The iteration counts in all cases were nearly
the same (with $2-3$ additional iterations) compared to those in
Table~\ref{tab:1}. The results thus indicate that the convergence is
independent of such jumps, which is also consistent (for the second setup)
with the observations of Tu~\cite{Tu-2011-TBA} for the three-level BDDC\ method.

It thus appears that the Nested BDDC\ method can be also used for problems
with variations of coefficients over multiple scales, if one is able to
perform a somewhat special partitioning into subdomains. However, because we
feel that this prevents a practical use of the proposed method for a realistic
simulations with coefficient variations that might not be exactly aligned with
the subdomain boundaries, we will address this issue in a separate study.

\begin{table}[ptbh]
\caption{The number of PCG iterations of the Multilevel BDDC preconditioner
from Algorithm~\ref{alg:multiscale} for different relative subdomain sizes
$H^{\ell}/H^{\ell-1}$, and different number of decomposition levels~$L$ which
determines the number of iterations of the Nested BDDC from
Algorithm~\ref{alg:multilevel-bddc}. For each decomposition level $\ell=1,
\dots, L-1$, nsub is the number of subdomains, $n$ is the total number of
degrees of freedom, $n_{\Gamma}$ is the number of degrees of freedom on the
interfaces, iter is the number of PCG iterations with the $M$-level BDDC
preconditioner where $M=L-\ell+1$. The stopping tolerance is $10^{-6}$, and
cond is the condition number estimate from the L\'{a}nczos sequence in
conjugate gradients.}%
\label{tab:1}
\begin{center}%
\begin{tabular}
[c]{|c|c|c|c|c|c|c|c|}\hline
$L$ & $\ell$ & $M$ & nsub & $n$ & $n_{\Gamma}$ & iter & cond\\\hline
\multicolumn{8}{|c|}{$H_{\ell}/H_{\ell-1}=3$}\\\hline
2 & 1 & 2 & 9 & 261 & 36 & 4 & 1.22\\\hline
\multirow{2}{*}{3} & 2 & 2 & 9 & 225 & 36 & 3 & 1.14\\
& 1 & 3 & 81 & 2241 & 432 & 8 & 2.07\\\hline
\multirow{3}{*}{4} & 3 & 2 & 9 & 225 & 36 & 3 & 1.14\\
& 2 & 3 & 81 & 2133 & 432 & 7 & 1.84\\
& 1 & 4 & 729 & 19,845 & 4212 & 11 & 3.48\\\hline
\multirow{4}{*}{5} & 4 & 2 & 9 & 225 & 36 & 3 & 1.14\\
& 3 & 3 & 81 & 2133 & 432 & 7 & 1.83\\
& 2 & 4 & 729 & 19,521 & 4212 & 10 & 3.09\\
& 1 & 5 & 6561 & 177,633 & 38,880 & 14 & 5.98\\\hline
\multicolumn{8}{|c|}{$H_{\ell}/H_{\ell-1}=4$}\\\hline
2 & 1 & 2 & 16 & 800 & 96 & 6 & 1.94\\\hline
\multirow{2}{*}{3} & 2 & 2 & 16 & 736 & 96 & 5 & 1.73\\
& 1 & 3 & 256 & 12,416 & 1920 & 10 & 3.45\\\hline
\multirow{3}{*}{4} & 3 & 2 & 16 & 736 & 96 & 5 & 1.72\\
& 2 & 3 & 256 & 12,160 & 1920 & 9 & 3.11\\
& 1 & 4 & 4096 & 197,120 & 32,256 & 14 & 6.62\\\hline
\multicolumn{8}{|c|}{$H_{\ell}/H_{\ell-1}=6$}\\\hline
2 & 1 & 2 & 36 & 3960 & 360 & 9 & 2.57\\\hline
\multirow{2}{*}{3} & 2 & 2 & 36 & 3816 & 360 & 9 & 2.30\\
& 1 & 3 & 1296 & 140,400 & 15,120 & 13 & 5.60\\\hline
\multicolumn{8}{|c|}{$H_{\ell}/H_{\ell-1}=8$}\\\hline
2 & 1 & 2 & 64 & 12,416 & 896 & 10 & 3.00\\\hline
\multirow{2}{*}{3} & 2 & 2 & 64 & 12,160 & 896 & 10 & 2.72\\
& 1 & 3 & 4096 & 787,456 & 64,512 & 17 & 7.46\\\hline
\multicolumn{8}{|c|}{$H_{\ell}/H_{\ell-1}=16$}\\\hline
2 & 1 & 2 & 256 & 197,120 & 7680 & 13 & 4.09\\\hline
\multicolumn{8}{|c|}{$H_{\ell}/H_{\ell-1}=32$}\\\hline
2 & 1 & 2 & 1024 & 3,147,776 & 63,488 & 15 & 5.25\\\hline
\end{tabular}
\end{center}
\end{table}

\begin{acknowledgements}
I would like to thank Dr. Christopher Harder and Prof. Jan Mandel for many discussions over the paper,
and the referees for useful comments and suggestions.
\end{acknowledgements}

\bibliographystyle{spmpsci}
\bibliography{bddc_mixed}

\end{document}